\documentclass[11pt]{amsart}
\pdfoutput=1

\usepackage{amsmath}
\usepackage{amssymb}
\usepackage{amsthm}
\usepackage{multicol}
\usepackage[foot]{amsaddr}
\usepackage[margin=1in]{geometry}
\usepackage[hidelinks]{hyperref}
\usepackage{bm}
\usepackage{mathtools}
\setlength{\columnseprule}{0.5pt}

\usepackage{lmodern}
\usepackage[T1]{fontenc}
\usepackage{microtype}

\DeclareMathOperator{\ddeg}{ddeg}

\def\d {\operatorname{d}}
\def\dh {\operatorname{dh}}

\newcommand{\ca}{\mathcal}
\newcommand{\bb}{\mathbb}

\newcommand{\pconv}{\rightsquigarrow}
\newcommand{\prece}{\preccurlyeq}

\newcommand{\fd}{\mathfrak{d}}

\newcommand{\x}{\times}

\newcommand{\ges}{\geqslant}
\newcommand{\les}{\leqslant}

\DeclareFontFamily{U}{fsy}{}
\DeclareFontShape{U}{fsy}{m}{n}{<->s*[.9]psyr}{}
\DeclareSymbolFont{der@m}{U}{fsy}{m}{n}
\DeclareMathSymbol{\der}{\mathord}{der@m}{182}
\DeclareFontFamily{OMS}{smallo}{}
\DeclareFontShape{OMS}{smallo}{m}{n}{<->s*[.65]cmsy10}{}
\DeclareSymbolFont{smallo@m}{OMS}{smallo}{m}{n}
\DeclareMathSymbol{\cao}{\mathord}{smallo@m}{79}

\newtheorem{thm}{Theorem}[section]
\newtheorem*{thm*}{Theorem}
\newtheorem{lem}[thm]{Lemma}
\newtheorem{prop}[thm]{Proposition}
\newtheorem{cor}[thm]{Corollary}

\theoremstyle{definition}

\newtheorem*{defn}{Definition}

\theoremstyle{remark}

\makeatletter\@addtoreset{case}{thm}\makeatother

\usepackage{etoolbox}

\makeatletter
\patchcmd{\@startsection}
  {\@afterindenttrue}
  {\@afterindentfalse}
  {}{}
\makeatother

\title{On the uniqueness of maximal immediate extensions of valued differential fields}
\author{Lou van den Dries}
\email{\href{mailto:vddries@math.uiuc.edu}{vddries@math.uiuc.edu}}
\author{Nigel Pynn-Coates}
\address{Department of Mathematics, University of Illinois at Urbana--Champaign, Urbana, IL 61801, USA}
\email{\href{mailto:pynncoa2@illinois.edu}{pynncoa2@illinois.edu}}
\thanks{The second author was partially supported by an NSERC Postgraduate Scholarship.}

\begin{document}

\begin{abstract}
So far there exist just a few results about the uniqueness of maximal immediate valued differential field extensions and about the relationship between differential-algebraic maximality and differential-henselianity; see \cite[Chapter 7]{adamtt}. 
We remove here the assumption of monotonicity in these results but replace it with the assumption that the value group is the union of its convex subgroups of finite (archimedean) rank.
We also show the existence and uniqueness of differential-henselizations of asymptotic fields with such a value group.
\end{abstract}

\maketitle

\section{Introduction}

This paper is concerned with various uniqueness properties of immediate differential-henselian extensions of valued differential fields.
First, some notations and conventions, which we keep as close as possible to \cite{adamtt}.
A \emph{valued differential field} is a field with both a valuation and a derivation on it, of equicharacteristic 0 (that is, the residue field has characteristic 0).
Throughout, $K$ will be a valued differential field with value group $\Gamma$, (surjective) valuation $v \colon K^\x \to \Gamma$, 
valuation ring $\ca O=\{a\in K:\ v(a) \ges 0\}$, maximal ideal
$\cao=\{a: v(a) > 0\}$ of $\ca O$, and residue field
$\bm k=\ca O/\cao$. We let
$\der$ be the derivation of $K$ and $C=\{a\in K:\ \der(a)=0\}$ the constant field of $K$. When the derivation is clear from context, we often write $a'$ for $\der(a)$.
Given also a valued differential field $L$, we let $\Gamma_L$ be its value group, $\bm k_L$ be its residue field, etc.

Our primary assumption is that $K$ has \emph{small derivation}, i.e., $\der \cao \subseteq \cao$.
It follows that $\der$ is continuous with respect to the valuation topology on $K$ (see \cite[Lemma~4.4.6]{adamtt}) and that $\der\ca O \subseteq \ca O$ (see \cite[Lemma~4.4.2]{adamtt}), so $\der$ induces a derivation on $\bm k$.
Throughout, $\bm k$ will be construed as a differential field with this derivation. A differential field $E$ of characteristic 0 is said to be \emph{linearly surjective} if for all $a_0, \dots, a_r \in E$, $a_r \neq 0$, there is $y \in E$ such that $1+a_0y+a_1y'+\dots+a_ry^{(r)}=0$.
Note that if $\bm k$ is linearly surjective, then the induced derivation on $\bm k$ is nontrivial.

In this paper, the word \emph{extension} abbreviates {\em valued differential field extension with small derivation}.
Similarly, \emph{embedding} will mean {\em valued differential field embedding into a valued differential field with small derivation}.
If $L$ is an extension of $K$, then we consider $\bm k$ as a differential subfield of $\bm k_L$ and $\Gamma$ as a subgroup of $\Gamma_L$ in the usual way.
An extension $L$ of $K$ is \emph{immediate} if $\Gamma_L=\Gamma$ and $\bm k_L=\bm k$.
We say that $K$ is \emph{maximal} if it has no proper immediate extensions.
We say that $K$ is \emph{differential-algebraically maximal} (\emph{$\d$-algebraically maximal} for short) if it has no proper $\d$-algebraic immediate extensions.
Maximal immediate extensions of $K$ exist by Zorn, and are spherically complete by \cite{maximext}.
Likewise, $\d$-algebraically maximal $\d$-algebraic immediate extensions of $K$ exist.

The authors of \cite{maximext} conjecture that if $\bm k$ is linearly surjective, then all maximal immediate extensions of $K$ are isomorphic over $K$.
Similarly, we expect that if $\bm k$ is linearly surjective, then all $\d$-algebraically maximal $\d$-algebraic immediate extensions of $K$ are isomorphic over $K$.
Both conjectures hold when $K$ is \emph{monotone}, i.e., $v(a') \ges v(a)$ for all $a \in \cao$: \cite[Theorem~7.4.3]{adamtt};
for monotone $K$ with {\em many constants}, that is, $v(C^\x)=\Gamma$, this is due to Scanlon \cite{scanlon}.
We are primarily interested in the non-monotone case, and prove these conjectures when $\Gamma$ has \emph{finite {\rm(}archimedean{\rm)} rank}, that is, $\Gamma$ has only finitely many convex subgroups.
More generally:

\begin{thm}\label{maximmedfinrankunion}
If $\bm k$ is linearly surjective and $\Gamma$ is the union of its finite rank convex subgroups,
then any two maximal immediate extensions of $K$ are isomorphic over $K$, and any two $\d$-algebraically maximal $\d$-algebraic immediate extensions of $K$ are isomorphic over $K$.
\end{thm}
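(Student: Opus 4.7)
The plan is a back-and-forth construction between two maximal immediate extensions $L_1, L_2$ of $K$. Given a $K$-embedding $\phi \colon F \hookrightarrow L_2$ with $F$ an intermediate valued differential field and an element $a \in L_1 \setminus F$, one seeks to extend $\phi$ to $F\langle a \rangle$. Since $F$ is immediate over $K$, $a$ is a pseudolimit of a pc-sequence $(a_\rho)$ in $F$; and since $L_2$ is maximal, hence spherically complete, $(\phi(a_\rho))$ has a pseudolimit $b \in L_2$. When $(a_\rho)$ is $\d$-transcendental over $F$, any such $b$ yields a valid extension of $\phi$; the genuine issue is the $\d$-algebraic case, where $b$ must be chosen compatibly with the minimal annihilating differential polynomial. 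The second assertion, concerning $\d$-algebraically maximal $\d$-algebraic immediate extensions, follows by the same template executed inside the $\d$-algebraic closures of $K$ in $L_1, L_2$.

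The $\d$-algebraic case reduces, as in the monotone arguments of \cite[Chapter 7]{adamtt}, to showing that $F$ (and in particular $K$) has an essentially unique $\d$-henselization $F^{\dh}$ inside any given immediate extension; by the abstract, the paper in fact establishes existence and uniqueness of $\d$-henselizations of asymptotic fields under our hypothesis on $\Gamma$. The hypothesis on $\Gamma$ invites a two-stage reduction to prove this. First, a direct limit over the chain of finite rank convex subgroups $\Gamma_i \subseteq \Gamma$ reduces the general case to $\Gamma$ of finite rank $n$. Second, one inducts on $n$: coarsen $K$ by a maximal proper convex subgroup $\Delta \subsetneq \Gamma$, obtaining a valued differential field with rank-$1$ value group $\Gamma / \Delta$ and a residue field that is itself a valued differential field with value group $\Delta$ of rank $n-1$, accessible by the inductive hypothesis. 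At each step one checks that the coarsening preserves being asymptotic with small derivation and that linear surjectivity is passed down to the relevant residue level.

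The main obstacle I foresee is the archimedean base case $n = 1$ without monotonicity. In the monotone setting, uniqueness of the $\d$-henselization rests on Newton-position arguments that exploit $v(a') \ges v(a)$ to keep valuation-theoretic estimates under control. Without monotonicity, elements with $v(a') < v(a)$ must be handled. The idea is presumably to pass to a compositional conjugate $\phi^{-1} \der$ of $\der$ for a suitable $\phi \in K^\x$, chosen so as to restore a monotone-like estimate for the particular differential polynomial under consideration, and then to solve the induced residue-level equation using linear surjectivity of $\bm k$. Coordinating such conjugations uniformly, so as to construct the $\d$-henselization itself rather than solve one equation at a time, is where I expect the finite rank hypothesis and the asymptotic structure to enter most crucially.
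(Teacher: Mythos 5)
Your back-and-forth skeleton (Zorn, a maximal partial isomorphism, spherical completeness handling the $\d$-transcendental case) matches the paper's final argument, but the core of the proof is missing and two of your proposed reductions would not go through. The reduction of the $\d$-algebraic case to existence and uniqueness of $\d$-henselizations is not available here: the paper proves those $\d$-henselization results only for \emph{asymptotic} $K$ (the existence rests on \cite[Corollary 9.4.11]{adamtt}), and Theorem~\ref{maximmedfinrankunion} makes no such hypothesis; moreover the logical order in the paper is the reverse of yours. Both the uniqueness of maximal immediate extensions and the $\d$-henselization results are deduced from a common combinatorial statement, the \emph{dh-configuration property}: for every divergent pc-sequence $(a_\rho)$ in $K$ with minimal differential polynomial $G$ one has $\ddeg_{\bm a}G=1$. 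That statement, fed into \cite[Lemma 7.3.1]{adamtt}, is what produces inside any $\d$-algebraically maximal extension with linearly surjective residue field a pseudolimit of $(a_\rho)$ that is simultaneously a zero of $G$ (Lemma~\ref{dalgmaxplimroot}) --- exactly what the $\d$-algebraic step of the back-and-forth needs. Your proposal never isolates this invariant or explains how the minimal differential polynomial constrains the choice of $b$.

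Your identification of the rank-one case as the main obstacle, to be handled by compositional conjugation, is also misplaced: by \cite[Corollary 6.1.2]{adamtt} a valued differential field with small derivation whose value group has rank $1$ is automatically monotone, so the base case is literally the known monotone result, and no compositional conjugation occurs anywhere in the paper. The genuinely new work is in the inductive step and in passing to the union, and the induction is run on the dh-configuration property itself (a statement about a single pc-sequence), not on the global uniqueness statement or on $\d$-henselizations. Given a proper nontrivial convex subgroup $\Delta$, one splits on whether the pc-sequence is $\Delta$-jammed: if not, one passes to a $\Delta$-fluent cofinal subsequence and transfers the problem to the coarsening $K_\Delta$; if so, one normalizes and transfers it to the specialization $\dot K$, in each case verifying that $G$ remains a minimal differential polynomial and relating $\ddeg$ in the cut across the transfer. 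Likewise, your ``direct limit over the chain of finite rank convex subgroups'' does not work at the level of fields, since $K$ is not in any useful sense a limit of valued differential fields with value groups $\Gamma_i$; the paper instead splits on whether the sequence is $\Delta_i$-jammed for some $i$ (reduce to finite rank via specialization) or has value increments cofinal in $\Gamma$, the latter handled by a separate lemma showing $v_{P_d}(\gamma_\rho)>v_{P_1}(\gamma_\rho)$ eventually for $d>1$ when $(\gamma_\rho)$ is cofinal.
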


\noindent
To prove this, we isolate a property, the \emph{differential-henselian configuration property}, used implicitly in \cite{adamtt}, and do the proof in two steps.
The first step is completely new and shows in \S\ref{sec:dhc} that $\Gamma$ as in the theorem has the differential-henselian configuration property.
The second shows in the same way as in the monotone case that the conclusion of the theorem holds whenever $\bm k$ is linearly surjective
and $\Gamma$ has this property; this is done in \S\ref{sec:mainresults}.

We also use the differential-henselian configuration property to prove the existence and uniqueness of differential-henselizations.
First we recall the definition of differential-henselianity from \cite{adamtt}.
Let $K\{Y\}=K[Y, Y', Y'', \dots]$ be the differential polynomial ring over $K$.
For $P \in K\{Y\}$, $v(P)$ is the minimum valuation of the coefficients of $P$ and $P_d$ is the homogeneous part of $P$ of degree $d$.
\begin{defn}
$K$ is \emph{differential-henselian} (\emph{$\d$-henselian} for short) if:
\begin{enumerate}
\item $\bm k$ is linearly surjective, and
\item for all $P \in \ca O\{Y\}$, if $v(P_0)>0$ and $v(P_1)=0$, then there is $y \in \cao$ with $P(y)=0$.
\end{enumerate}
\end{defn}

\noindent
The notion of $\d$-henselianity is connected to $\d$-algebraic maximality: if $K$ is $\d$-algebraically maximal and $\bm k$ is linearly surjective, then $K$ is $\d$-henselian, by \cite[Theorem~7.0.1]{adamtt}.
The authors of that book also proved a partial converse \cite[Theorem~7.0.3]{adamtt}: if $K$ is monotone, $\d$-henselian, and has \emph{few constants} (i.e., $C \subseteq \ca O$), then $K$ is $\d$-algebraically maximal.
They asked whether the monotonicity assumption can be dropped. 
The following is a partial result in that direction:
\begin{thm}
If $K$ is $\d$-henselian, has few constants, and $\Gamma$ is the union of its finite rank convex subgroups,
then $K$ is $\d$-algebraically maximal.
\end{thm}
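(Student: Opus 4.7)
The plan is to adapt the proof of \cite[Theorem 7.0.3]{adamtt}, which handles the monotone case, replacing the role of monotonicity by the \emph{differential-henselian configuration property} (d-hc for short) isolated in \S\ref{sec:dhc}. This property encapsulates exactly the combinatorial content of monotonicity that makes the monotone argument go through, and \S\ref{sec:dhc} establishes that any value group which is the union of its finite rank convex subgroups has d-hc.

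I would argue by contradiction: suppose $K$ has a proper $\d$-algebraic immediate extension $L$. Pick $a\in L\setminus K$; since $L$ is immediate, $a$ is a pseudo-limit of some divergent pseudo-Cauchy sequence $(a_\rho)$ in $K$, and since $a$ is $\d$-algebraic over $K$, this sequence is of $\d$-algebraic type over $K$. Choose a differential polynomial $P\in K\{Y\}$ of minimal complexity with $P(a_\rho)\pconv 0$. Standard pseudo-Cauchy machinery then reduces the task to finding $y\in K$ with $P(y)=0$ and with $y$ pseudo-converging to the same limit as $(a_\rho)$; such a $y$ would be a pseudo-limit of $(a_\rho)$ in $K$, contradicting divergence.

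Following the monotone template, the idea is to pick $a_\sigma$ far enough along the sequence and a scalar $u\in K^\x$ so that the translated polynomial $Q(Z):=P(a_\sigma+uZ)$, after dividing by an appropriate element of $K^\x$, lies in $\ca O\{Z\}$ with $v(Q_0)>0$ and $v(Q_1)=0$. Then $\d$-henselianity (including the linear surjectivity of $\bm k$ it entails) produces $z\in\cao$ with $Q(z)=0$, and $y:=a_\sigma+uz$ is the required pseudo-limit in $K$.

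The hard part is the choice of $u$ that puts $Q$ into this form. In the monotone case, the inequalities $v(a^{(i)})\ges v(a)$ give direct control over the valuations $v(P_i(a_\sigma))$ across the orders $i$ of derivatives appearing in $P$, and $u$ falls out. Without monotonicity this control is lost, and this is exactly where the d-hc property from \S\ref{sec:dhc} enters: working inside a finite rank convex subgroup of $\Gamma$ large enough to contain the relevant valuations, d-hc supplies a witness $u$ putting $Q$ into the desired configuration. The \emph{few constants} hypothesis $C\subseteq\ca O$ plays the same role as in the monotone case: combined with the minimal complexity of $P$, it forces $v(Q_1)=0$ rather than $>0$, which is precisely what makes the $\d$-henselian axiom applicable and closes the contradiction.
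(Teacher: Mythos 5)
Your high-level strategy is the same as the paper's: combine Corollary~\ref{dhcfinrankunion} (a value group that is the union of its finite rank convex subgroups has the dh-configuration property) with an adaptation of \cite[Theorem 7.0.3]{adamtt} in which that property replaces monotonicity (this adaptation is Theorem~\ref{dhtodalgmax}). Your use of the dh-configuration property is also essentially right: it is what supplies, for each $\rho$, an element $g_\rho$ with $v(g_\rho)=\gamma_\rho$ and $\ddeg G_{+a_\rho,\times g_\rho}=1$, so that $\d$-henselianity (via \cite[Lemma 7.1.1]{adamtt}) yields a zero of $G$ within $g_\rho$ of $a_\rho$.

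The gap is in your closing step. A single application of $\d$-henselianity at a stage $\sigma$ produces $y=a_\sigma+uz$ with $G(y)=0$ and $v(y-a_\sigma)>\gamma_\sigma$; this does \emph{not} make $y$ a pseudolimit of $(a_\rho)$, which would require $v(y-a_\rho)$ to be eventually strictly increasing, i.e., $y$ to approximate the sequence better than every fixed $\gamma_\sigma$. In the classical henselian (non-differential) setting one repairs this by observing that $P$ has only finitely many roots, so some one root works cofinally; a differential polynomial can have infinitely many zeroes, so that route breaks down, and this is exactly the point at which the few-constants hypothesis must do real work. The actual argument, both in \cite[Theorem 7.0.3]{adamtt} and in Theorem~\ref{dhtodalgmax}, is a counting argument: assuming $(a_\rho)$ has no pseudolimit, one produces $r+2$ zeroes $z_{\rho_0},\dots,z_{\rho_{r+1}}$ of $G$ (where $r\ges 1$ is the order of $G$) at strictly decreasing distances from a pseudolimit $a$ in an immediate extension, all satisfying $a_{\rho_0}-z_{\rho_i}\prece g_{\rho_0}$ and $\ddeg G_{+z_{\rho_{r+1}},\times g_{\rho_0}}=1$, and this contradicts \cite[Lemma 7.5.5]{adamtt}. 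That lemma is where $C\subseteq\ca O$ enters: it bounds the number of zeroes of $G$ in such a configuration (the kernel of the dominant linear part has dimension $\les r$ over $C$, and few constants forces linearly independent kernel elements to have distinct valuations). Relatedly, your claim that few constants is what forces $v(Q_1)=0$ rather than $v(Q_1)>0$ is misplaced: that normalization is exactly the content of $\ddeg_{\bm a}G=1$, i.e., of the dh-configuration property itself, and does not use $C\subseteq\ca O$.
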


\noindent
If $\bm k$ is linearly surjective, then $K$ has an immediate $\d$-henselian extension: just take any immediate extension of $K$ that is maximal (or $\d$-algebraically maximal).
Conversely, if $K$ has an immediate $\d$-henselian extension, then $\bm k$ must be linearly surjective.
In analogy with the henselization of a valued field, we propose: 

\begin{defn}
A \emph{differential-henselization} (\emph{$\d$-henselization} for short) of $K$ is an immediate $\d$-henselian extension of $K$ that embeds over $K$ into every immediate $\d$-henselian extension of $K$.
\end{defn}

\noindent
Now, we turn to the setting of asymptotic fields.
We recall from \cite[\S9.1]{adamtt} that $K$ is said to be \emph{asymptotic} if, for all $f, g \in \cao \setminus\{0\}$,
$$v(f)>v(g)\ \iff\ v(f')>v(g').$$
Note that if $K$ is asymptotic, then it has few constants.
Conversely, if $K$ is $\d$-henselian and has few constants, then it is asymptotic by \cite[Lemmas 9.1.1 and 7.1.8]{adamtt}.

\begin{thm}
If $K$ is asymptotic, $\bm k$ is linearly surjective, and $\Gamma$ is the union of its finite rank convex subgroups, 
then $K$ has, up to isomorphism over $K$, a unique $\d$-henselization.
\end{thm}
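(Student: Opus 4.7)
My plan is to take $H$ to be a $\d$-algebraically maximal $\d$-algebraic immediate extension of $K$---which exists by Zorn, as noted in the introduction---and to argue that $H$ is the desired $\d$-henselization.

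Since $\bm k$ is linearly surjective and $H$ is $\d$-algebraically maximal, \cite[Theorem 7.0.1]{adamtt} gives that $H$ is $\d$-henselian, so $H$ is already an immediate $\d$-henselian extension of $K$. What remains for existence is to show that $H$ embeds over $K$ into an arbitrary immediate $\d$-henselian extension $M$ of $K$. Any such $M$ has $\Gamma_M=\Gamma$ and $\bm k_M=\bm k$ by immediacy; and since $K$ is asymptotic, hence has few constants, $M$ does too, as an immediate extension of an asymptotic field is again asymptotic. Thus the hypotheses of the second theorem of the introduction are satisfied by $M$, so $M$ is $\d$-algebraically maximal. I would then build a $K$-embedding $H \hookrightarrow M$ by Zorn's lemma applied to the partial $K$-embeddings $\phi \colon H_0 \to M$ with $K \subseteq H_0 \subseteq H$; the inductive step, extending $\phi$ to incorporate a chosen $a \in H \setminus H_0$, uses $\d$-henselianity of $M$ together with the $\d$-henselian configuration property of $\Gamma$ established in \S\ref{sec:dhc}, and should run in parallel with the corresponding step in the proof of Theorem~\ref{maximmedfinrankunion} in \S\ref{sec:mainresults}, now with the $\d$-henselian $M$ playing the role of a $\d$-algebraically maximal target. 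I expect this embedding step to be the main technical obstacle.

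For uniqueness, let $H'$ be any other $\d$-henselization of $K$. Its defining property supplies a $K$-embedding $H' \hookrightarrow H$, so $H'$ is $\d$-algebraic over $K$. At the same time $H'$ is an immediate $\d$-henselian extension of $K$ with value group $\Gamma$ and few constants (again by the asymptotic reasoning), so the second theorem of the introduction applies to $H'$ and yields that $H'$ is $\d$-algebraically maximal. Hence $H'$ is a $\d$-algebraically maximal $\d$-algebraic immediate extension of $K$, and Theorem~\ref{maximmedfinrankunion} gives $H' \cong H$ over $K$, completing the proof.
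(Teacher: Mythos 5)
Your proof is correct and takes essentially the same route as the paper's (Theorem~\ref{dhensel} combined with Corollary~\ref{dhcfinrankunion}): the load-bearing ingredients are identical, namely Theorem~\ref{dhtodalgmax}, the Zorn back-and-forth of Theorem~\ref{maximmed} driven by the dh-configuration property, and the pseudolimit-root lemma. The one genuine (if minor) difference is the choice of candidate: the paper takes $K^{\dh}$ from \cite[Corollary 9.4.11]{adamtt}, a \emph{minimal} immediate asymptotic $\d$-henselian extension of $K$, whereas you start from the other end with a $\d$-algebraically maximal $\d$-algebraic immediate extension $H$; the paper itself notes (via Theorems~\ref{dhtodalgmax} and~\ref{maximmed}) that these coincide up to isomorphism over $K$, so your choice simply dispenses with the external citation. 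Similarly, for the embedding step the paper applies Corollary~\ref{dhplimroot} directly to the $\d$-henselian target, while you first upgrade the target $M$ to being $\d$-algebraically maximal (using that $M$ is asymptotic, hence has few constants, and that $\Gamma_M=\Gamma$ has the dh-configuration property) and then reuse Lemma~\ref{dalgmaxplimroot}; these amount to the same thing, since Corollary~\ref{dhplimroot} is itself proved by invoking Theorem~\ref{dhtodalgmax} in place of $\d$-algebraic maximality. Your Zorn argument then runs exactly as in the proof of Theorem~\ref{maximmed}, with only the $\d$-algebraic-type case arising because $H$ is $\d$-algebraic over $K$, and your uniqueness argument matches the paper's.
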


\noindent
If $\der$ is trivial, then the results above, with no assumptions on $\Gamma$, are given by the analogous results for valued fields.
If $\Gamma=\{0\}$ (i.e., the valuation is trivial), then $K \cong \bm k$ as differential fields, and $K$ is the unique maximal immediate extension of $K$.

To review the standing assumptions, $K$ is throughout a valued differential field (of equicharacteristic 0) with small derivation.
We henceforth assume that both the derivation and valuation are nontrivial.
Additional assumptions on $K$, $\Gamma$, or $\bm k$ will be specified where needed.
In particular, we sometimes assume that the induced derivation on $\bm k$ is nontrivial.
Note that when this is the case, all extensions of $K$ are \emph{strict}, as defined in \cite{maximext}.

Finally, we often use results from \cite{adamtt}, especially from Sections 2.2, 3.2, 4.3, 4.4, 4.5, 6.1, 6.6, 6.8, 6.9, 7.1, 7.2, and 7.3, but we give precise references. 

The present paper, like
\cite[Chapters 6,7]{adamtt} and \cite{maximext}, is meant to contribute to an emerging theory of valued {\em differential\/} fields, in analogy with the theory of valued fields. One contrast to keep in mind is that small valued differential fields have typically
rather large (infinite rank) value groups. 

\subsection*{Additional notations and conventions}
We let $m, n, r, d \in \bb N = \{0, 1, 2, \dots\}$.
If $\Delta$ is an ordered abelian group, we let $\Delta^> = \{\delta \in \Delta : \delta > 0\}$.
If $E$ is a field, we let $E^\x = E \setminus \{0\}$.
If $E$ is a valued field and its valuation $v_E$ is clear from the context, then we define for $a$, $b \in E^\x$:
$$\begin{array}{lc}
a \prece b\ \Leftrightarrow\ v_E(a)\ges v_E(b),\qquad a \prec b\ \Leftrightarrow\ v_E(a)> v_E(b),\\
a \asymp b\ \Leftrightarrow\ v_E(a)=v_E(b),\qquad  
  a\sim b\ \Leftrightarrow\ a-b \prec b.
\end{array}$$
We abbreviate ``pseudocauchy sequence'' by \emph{pc-sequence} and use facts about them frequently; see \cite[\S2.2 and \S3.2]{adamtt}.
In the differential setting, see \cite[\S4.4, \S6.8, and \S6.9]{adamtt} for the notions of a pc-sequence being of \emph{$\d$-algebraic type} (equivalently, having a \emph{minimal differential polynomial}) and being of \emph{$\d$-transcendental type}.

As to differential polynomials, let $E$ be a differential field (of characteristic 0) and set $E\{Y\}^{\neq} \coloneqq E\{Y\}\setminus\{0\}$.
The \emph{order} of $P \in E\{Y\}$ is the least $r$ such that $P \in E[Y, Y', \dots, Y^{(r)}]$.
Let $r_P$ be the order of $P$. Then the \emph{degree} of $P$ is its total degree as an element of $E[Y, Y', \dots, Y^{(r_P)}]$, denoted by $\deg P$.
Let $s_P$ be the degree of $P$ in $Y^{(r_P)}$ and $t_P=\deg P$.
Then the \emph{complexity} of $P$ is the ordered triple $(r_P, s_P, t_P)$ and is denoted by $c(P)$.
We often use the \emph{multiplicative conjugate} $P_{\x a}=P(aY)$ and the \emph{additive conjugate} $P_{+a}=P(a+Y)$, for $a \in E$.
For more on such conjugation, see \cite[\S4.3]{adamtt}.

For a valued differential field $E$ we extend its valuation $v_E$ to $E\{Y\}$ by 
$$v_E(P)\ \coloneqq\ \min \{v_E(a):\ a \text{ is a coefficient of }P\},$$ and we extend the binary relations
$\prece$, $\prec$, $\asymp$, and $\sim$ on $E$ to $E\{Y\}$ accordingly.

\section{Preliminaries}

In this section, $P \in K\{Y\}^{\neq}$, and 
$(a_\rho)$ is a pc-sequence in $K$ with $\gamma_\rho \coloneqq v(a_{\rho+1}-a_\rho)$; here and later $\rho+1$ denotes the immediate successor of $\rho$ in the well-ordered set of indices.
Recall from \cite[\S6.6]{adamtt} the notion of the \emph{dominant part} of $P$:
To any $P$ we associate $\fd_P \in K^{\x}$ with $\fd_P \asymp P$ such that $\fd_P = \fd_Q$ for all $Q \in K\{Y\}^{\neq}$ with $Q \sim P$. Then
$\fd_P^{-1} P\asymp 1$, so we can define the \emph{dominant part} $D_P\in \bm k\{Y\}^{\neq}$ of $P$ to be the image of $\fd_P^{-1}P$ in $\bm k\{Y\}$ and 
the \emph{dominant degree} of $P$ to be $\ddeg P \coloneqq \deg D_P$.
We refer to \cite[\S6.6]{adamtt} for basic properties of these notions.
For later use we show that the condition
$\ddeg P \ges 1$ is necessary for the existence of a zero $f\prece 1$ of $P$ in an extension of $K$: 

\begin{lem}\label{ddegroot}
Let $g \in K^\times$, and suppose $P(f)=0$ for some $f \prece g$ in some extension of $K$. Then $\ddeg P_{\x g} \ges 1$.
\end{lem}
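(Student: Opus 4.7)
The plan is to reduce to the case $g=1$ and then rule out $\ddeg Q = 0$ by a direct valuation estimate using smallness of the derivation. For the reduction, take the extension $L$ and the zero $f$ as given, set $h \coloneqq f/g \in L$, and observe that $h \prece 1$ and $P_{\x g}(h) = P(gh) = P(f) = 0$. So after renaming it suffices to prove: if $Q \in K\{Y\}^{\neq}$ has a zero $h \prece 1$ in some extension $L$ of $K$, then $\ddeg Q \ges 1$.

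Suppose for contradiction that $\ddeg Q = 0$, so $D_Q$ is a nonzero element of $\bm k$. Unwinding the definitions of $\f d_Q$ and $D_Q$ from \cite[\S6.6]{adamtt}, this says that $\f d_Q^{-1} Q$ has its degree-$0$ coefficient in $\ca O^\x$ and all other coefficients in $\cao$; equivalently, writing $Q = \sum_d Q_d$ for the homogeneous decomposition, $v(Q_0) = v(Q)$ while $v(Q_d) > v(Q)$ for every $d \ges 1$.

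Now evaluate in $L$. Since $L$ has small derivation, $\der_L \ca O_L \subseteq \ca O_L$ by \cite[Lemma 4.4.2]{adamtt}, so $h^{(i)} \prece 1$ for all $i$. For each $d \ges 1$, every monomial of $Q_d$ evaluated at $h$ has valuation at least that of its coefficient, whence $v_L(Q_d(h)) \ges v(Q_d) > v(Q)$. On the other hand $v_L(Q_0(h)) = v(Q_0) = v(Q)$. The ultrametric inequality then yields $v_L(Q(h)) = v(Q) < \infty$, contradicting $Q(h) = 0$.

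The only step requiring any thought is the translation from $\ddeg Q = 0$ to the concrete valuation bounds $v(Q_0) = v(Q)$ and $v(Q_d) > v(Q)$ for $d \ges 1$; once that is in hand, the contradiction is an immediate consequence of smallness of the derivation on $L$.
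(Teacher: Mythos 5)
Your proposal is correct and follows essentially the same route as the paper: the identical reduction $h=f/g\prece 1$, $P_{\times g}(h)=0$, followed by the observation that a zero $h\prece 1$ forces $\deg D_Q\ges 1$. The paper phrases the second step directly as ``$Q(h)=0$ implies $D_Q(\bar h)=0$, so $D_Q$ is not a nonzero constant,'' whereas you prove the contrapositive by explicitly unwinding $\ddeg Q=0$ into the coefficient estimates $v(Q_0)=v(Q)$ and $v(Q_d)>v(Q)$ for $d\ges 1$ and then using small derivation and the ultrametric inequality; the content is the same.
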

\begin{proof}
Let $L$ be an extension of $K$ and suppose $f \in L$, $f \preccurlyeq g$, and $P(f)=0$. Then $f=ag$ for some $a \in L$ with $a \preccurlyeq 1$.
Letting $Q=P_{\x g}$, we have $Q(a)=0$, so $D_Q(\bar{a})=0$. Hence, $\ddeg P_{\times g} \ges 1$.
\end{proof}

\noindent
For $\gamma\in \Gamma$  we have by \cite[\S6.6]{adamtt} that
for all $g\in K^\times$ with $vg=\gamma$,
$$\ddeg_{\ges \gamma}P\  \coloneqq \ \max\{\ddeg P_{\times f}:\ f\in K^\times,\ vf\ges \gamma\}\ =\ \ddeg P_{\times g}.$$

\subsection*{Dominant degree in a cut}
Here we define the crucial notion of ``dominant degree in a cut'' and record some basic properties.
This is a special case of the ``newton degree in a cut'' in \cite{maximext}.

\begin{lem}
There is an index $\rho_0$ and a number $d\big(P, (a_\rho)\big) \in \bb N$ such that for all $\rho>\rho_0$,
$$\ddeg_{\ges \gamma_\rho} P_{+a_\rho}\ =\ d\big(P, (a_\rho)\big).$$
Whenever $(b_\sigma)$ is a pc-sequence in $K$ equivalent to $(a_\rho)$, we have $d\big(P,(a_\rho)\big)=d\big(P,(b_\sigma)\big)$.
\end{lem}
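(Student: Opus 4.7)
The plan rests on a ``shift-invariance'' principle for $\ddeg_{\ges \gamma}$: for any $Q \in K\{Y\}^{\neq}$, $c \in K$, and $\gamma \in \Gamma$,
$$\ddeg_{\ges \gamma}(Q_{+c})\ \les\ \ddeg_{\ges \gamma}(Q), \qquad \text{with equality when } vc \ges \gamma.$$
To see this, fix $g \in K^\x$ with $vg = \gamma$. The identity $(Q_{+c})_{\x g}(Y) = Q_{\x g}(Y + c/g)$, together with the displayed formula $\ddeg_{\ges \gamma}(\,\cdot\,) = \ddeg(\,\cdot\,)_{\x g}$ preceding the lemma, reduces the claim to two facts about dominant degree: additive conjugation of a differential polynomial by an element of $\ca O$ preserves $\ddeg$ (this gives the equality case, since $vc \ges \gamma$ means $v(c/g) \ges 0$), while a general additive conjugation cannot increase $\ddeg$. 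Both follow from the homogeneous-decomposition formalism of \cite[\S6.6]{adamtt}.

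Granted the principle, define $f(\rho) \coloneqq \ddeg_{\ges \gamma_\rho}(P_{+a_\rho}) \in \{0, 1, \ldots, \deg P\}$. For $\rho < \rho'$ both large enough that $v(a_{\rho'} - a_\rho) = \gamma_\rho < \gamma_{\rho'}$, writing $P_{+a_{\rho'}} = (P_{+a_\rho})_{+(a_{\rho'} - a_\rho)}$ and applying the principle together with the monotonicity of $\ddeg_{\ges \gamma}$ in $\gamma$ yields
$$f(\rho')\ \les\ \ddeg_{\ges \gamma_{\rho'}}(P_{+a_\rho})\ \les\ \ddeg_{\ges \gamma_\rho}(P_{+a_\rho})\ =\ f(\rho).$$
Thus $(f(\rho))$ is eventually nonincreasing, hence eventually constant; the stable value is $d(P, (a_\rho)) \in \bb N$.

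For the equivalence statement, let $(b_\sigma)$ be equivalent to $(a_\rho)$, set $\gamma_\sigma' \coloneqq v(b_{\sigma+1} - b_\sigma)$ and $d' \coloneqq d(P, (b_\sigma))$. Equivalence of pc-sequences means that for every $\gamma \in \Gamma$, eventually $v(a_\rho - b_\sigma) > \gamma$; from this one checks that $(\gamma_\rho)$ and $(\gamma_\sigma')$ are mutually cofinal in $\Gamma$ and that $v(a_\rho - b_\sigma)$ eventually exceeds both $\gamma_\rho$ and $\gamma_\sigma'$. The equality case of the principle, applied in both directions, yields
$$d\ =\ \ddeg_{\ges \gamma_\rho}(P_{+a_\rho})\ =\ \ddeg_{\ges \gamma_\rho}(P_{+b_\sigma}), \qquad d'\ =\ \ddeg_{\ges \gamma_\sigma'}(P_{+b_\sigma})\ =\ \ddeg_{\ges \gamma_\sigma'}(P_{+a_\rho})$$
for sufficiently large $\rho, \sigma$. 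Picking such $\rho, \sigma$ with $\gamma_\sigma' \ges \gamma_\rho$ and using $\gamma$-monotonicity on $P_{+b_\sigma}$ gives $d' \les d$; the symmetric choice gives $d \les d'$, so $d = d'$.

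The principal obstacle is the shift-invariance principle itself: the two facts about additive conjugation on which it rests are subtle in the differential setting because additive and multiplicative conjugation interact nontrivially with the homogeneous decomposition of a differential polynomial, so making them precise requires some unpacking of \cite[\S6.6]{adamtt}. Once granted, the remainder of the argument is organized bookkeeping around pc-sequence cofinality.
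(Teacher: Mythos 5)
There is a genuine gap: the inequality half of your ``shift-invariance principle'' is false. Additive conjugation is invertible, so if $\ddeg_{\ges\gamma}(Q_{+c})\les\ddeg_{\ges\gamma}(Q)$ held for all $c$, then applying it to $Q_{+c}$ and $-c$ would force equality for every $c$; but $\ddeg_{\ges\gamma}$ is not invariant under arbitrary additive conjugation. Concretely, take $Q=Y-c$ with $vc<\gamma=vg$: then $\ddeg_{\ges\gamma}(Q)=\ddeg(gY-c)=0$ (the constant term dominates), while $\ddeg_{\ges\gamma}(Q_{+c})=\ddeg(gY)=1$. The same example defeats the first link of your chain $f(\rho')\les\ddeg_{\ges\gamma_{\rho'}}(P_{+a_\rho})\les\ddeg_{\ges\gamma_\rho}(P_{+a_\rho})$: with $P=Y-a_{\rho'}$ one gets $f(\rho')=1$ but $\ddeg_{\ges\gamma_{\rho'}}(P_{+a_\rho})=0$, precisely because the shift $c=a_{\rho'}-a_\rho$ has $vc=\gamma_\rho<\gamma_{\rho'}$, i.e., you are outside the (correct) equality case. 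So the eventual monotonicity of $f$, and with it the existence of $d\big(P,(a_\rho)\big)$, is not established as written.

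The repair is to apply your two ingredients in the opposite order: first use the equality case at the level of the \emph{smaller} value $\gamma_\rho$, where $v(a_{\rho'}-a_\rho)=\gamma_\rho\ges\gamma_\rho$ does hold, to get $\ddeg_{\ges\gamma_\rho}(P_{+a_{\rho'}})=\ddeg_{\ges\gamma_\rho}(P_{+a_\rho})$, and only then invoke monotonicity in $\gamma$ to get $f(\rho')=\ddeg_{\ges\gamma_{\rho'}}(P_{+a_{\rho'}})\les\ddeg_{\ges\gamma_\rho}(P_{+a_{\rho'}})$. This two-parameter statement, in which the shift is controlled by the smaller of the two valuations and which cannot be factored through $\ddeg_{\ges\gamma_{\rho'}}(P_{+a_\rho})$, is exactly \cite[Corollary 6.6.12]{adamtt}, the single tool the paper uses both here and, after arranging $v(b_\sigma-a_{\rho_0})=\gamma_{\rho_0}$ and $\beta_\sigma\ges\gamma_{\rho_0}$ via \cite[Lemma 2.2.17]{adamtt}, for the comparison of equivalent pc-sequences. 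A secondary point: equivalence of pc-sequences does \emph{not} mean that $v(a_\rho-b_\sigma)$ eventually exceeds every $\gamma\in\Gamma$ (that would force the width to be $\{\infty\}$), and the sequences $(\gamma_\rho)$, $(\gamma'_\sigma)$ need not be cofinal in $\Gamma$; what one actually gets, and what suffices for your concluding comparison, is that $v(b_\sigma-a_\rho)$ eventually exceeds $\gamma_\rho$ and that the two sequences of steps are cofinal in each other.
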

\begin{proof}
Take $\rho_0$ such that for all $\rho'>\rho\ges\rho_0$, we have $\gamma_{\rho'}>\gamma_\rho$ and $\gamma_\rho \in \Gamma$.
Then $$\ddeg_{\ges\gamma_{\rho'}} P_{+a_{\rho'}}\ \les\ \ddeg_{\ges\gamma_{\rho}} P_{+a_{\rho}}\ \text{ for all }\rho'>\rho\ges\rho_0$$
by \cite[Corollary~6.6.12]{adamtt}.
This gives the existence of $d\big(P,(a_\rho)\big)$. Set $d=d\big(P,(a_\rho)\big)$.
We can assume $\rho_0$ to be so large that $\ddeg_{\ges\gamma_\rho}P_{+a_\rho}=d$ for all $\rho\ges\rho_0$.
Let $(b_{\sigma})$ be a pc-sequence in $K$ equivalent to $(a_\rho)$, and set $\beta_\sigma=v(b_{\sigma+1}-b_\sigma)$.
Take an index $\sigma_0$ and $e \in \bb N$ so that  $\beta_\sigma \in \Gamma$
and $\ddeg_{\ges \beta_\sigma} P_{+b_\sigma}=e$ for all $\sigma \ges \sigma_0$.
By \cite[Lemma~2.2.17]{adamtt}, we can further arrange that $b_\sigma-a_\rho \prec a_\rho-a_{\rho_0}$ and $\beta_\sigma \ges \gamma_{\rho_0}$ for all $\rho>\rho_0$ and $\sigma>\sigma_0$.
Then for $\sigma>\sigma_0$ we have $v(b_\sigma-a_{\rho_0})=\gamma_{\rho_0}$, and so
$$e\ =\ \ddeg_{\ges \beta_\sigma} P_{+b_\sigma}\ \les\ \ddeg_{\ges \gamma_{\rho_0}} P_{+a_{\rho_0}}\ =\ d,$$
by \cite[Corollary~6.6.12]{adamtt}.
By symmetry, we also have $d \les e$, so $d=e$.
\end{proof}

\noindent
As in \cite[\S2]{maximext} and \cite[\S11.2]{adamtt}, we associate to each pc-sequence $(a_\rho)$ in $K$ its \emph{cut} in $K$, denoted by $c_K(a_\rho)$, such that if $(b_\sigma)$ is a pc-sequence in $K$, then
$$c_K(a_\rho) = c_K(b_\sigma)\ \iff\ (b_\sigma)\ \text{is equivalent to}\ (a_\rho).$$
Below, $\bm a  \coloneqq  c_K(a_\rho)$.
Note that $c_K(a_\rho+y)$ for $y \in K$ depends only on $\bm a$ and $y$, so we let $\bm a+y$ denote $c_K(a_\rho+y)$. Similarly, $c_K(a_\rho y)$ for $y \in K^\times$ depends only on $\bm a$ and $y$, so we let $\bm a\cdot y$ denote $c_K(a_\rho y)$.
\begin{defn}
The \emph{dominant degree of $P$ in the cut of $(a_\rho)$}, denoted by $\ddeg_{\bm a} P$, is the natural number $d\big(P,(a_\rho)\big)$ from the previous lemma.
\end{defn}

\begin{lem}\label{ddegbasic}
The dominant degree in a cut has the following properties:
\begin{enumerate}
\item $\ddeg_{\bm a} P \les \deg P$;
\item $\ddeg_{\bm a} P_{+y} = \ddeg_{\bm a+y} P$ for $y \in K$;
\item if $y \in K$ and $vy$ is in the width of $(a_\rho)$, then $\ddeg_{\bm a} P_{+y} = \ddeg_{\bm a} P$;
\item $\ddeg_{\bm a} P_{\times y} = \ddeg_{\bm a\cdot y} P$ for $y \in K^\times$;
\item if $Q \in K\{Y\}^{\neq}$, then $\ddeg_{\bm a} PQ=\ddeg_{\bm a}P + \ddeg_{\bm a} Q$;
\item if $P(\ell)=0$ for some pseudolimit $\ell$ of $(a_\rho)$ in an extension of $K$, then $\ddeg_{\bm a} P \ges 1$;
\item if $L$ is an extension of $K$, then $\ddeg_{\bm a} P=\ddeg_{\bm a_L} P$, where $\bm a_L=c_L(a_\rho)$.
\end{enumerate}
\end{lem}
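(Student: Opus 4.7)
The plan is to derive each of the seven clauses by unwinding the definition $\ddeg_{\bm a}P=\ddeg_{\ges \gamma_\rho}P_{+a_\rho}$ (valid for all sufficiently large $\rho$) and invoking the basic properties of $\ddeg$ and $\ddeg_{\ges \gamma}$ recorded in \cite[\S6.6]{adamtt}. For (1), one has $\ddeg Q\les\deg Q$ for any $Q$ and $\deg P_{+a_\rho,\x g}=\deg P$, so the bound passes to $\ddeg_{\bm a}P$. For (2), the identity $(P_{+y})_{+a_\rho}=P_{+(a_\rho+y)}$ combined with the observation that $(a_\rho+y)$ is a pc-sequence with the same step valuations $\gamma_\rho$ and cut $\bm a+y$ yields the equality upon taking $\ddeg_{\ges \gamma_\rho}$. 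Clause (3) is then immediate from (2) once one checks that if $vy$ lies in the width of $(a_\rho)$, i.e., $vy>\gamma_\rho$ for all $\rho$, then $(a_\rho)$ and $(a_\rho+y)$ are equivalent pc-sequences, hence $\bm a+y=\bm a$.

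For (4), I would use the analogous identity $(P_{\x y})_{+a_\rho}=(P_{+y a_\rho})_{\x y}$ together with $\ddeg_{\ges \gamma}(Q_{\x y})=\ddeg_{\ges \gamma+vy}Q$ to obtain
\[\ddeg_{\ges \gamma_\rho}(P_{\x y})_{+a_\rho}\ =\ \ddeg_{\ges \gamma_\rho+vy}P_{+y a_\rho}\ =\ \ddeg_{\bm a\cdot y}P,\]
the last equality because $(y a_\rho)$ has step valuations $\gamma_\rho+vy$ and cut $\bm a\cdot y$. For (5), multiplicativity of the dominant part gives $\ddeg(QR)_{\x g}=\ddeg Q_{\x g}+\ddeg R_{\x g}$ for any $g$ with $vg=\gamma$, and hence $\ddeg_{\ges \gamma}(QR)=\ddeg_{\ges \gamma}Q+\ddeg_{\ges \gamma}R$; applying this to $P_{+a_\rho}$ and $Q_{+a_\rho}$ at $\gamma=\gamma_\rho$ and passing to sufficiently large $\rho$ gives the claim.

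For (6), if $P(\ell)=0$ for a pseudolimit $\ell$ of $(a_\rho)$ in some extension, then eventually $v(\ell-a_\rho)=\gamma_\rho$ and $P_{+a_\rho}(\ell-a_\rho)=0$; Lemma \ref{ddegroot} applied to $P_{+a_\rho}$ and any $g\in K^\x$ with $vg=\gamma_\rho$ forces $\ddeg (P_{+a_\rho})_{\x g}\ges 1$, so $\ddeg_{\bm a}P\ges 1$. For (7), I would argue that $\ddeg_{\ges \gamma_\rho}P_{+a_\rho}$ is insensitive to the ambient field: the max defining $\ddeg_{\ges \gamma_\rho}$ over $L$ is attained at any $g\in L^\x$ with $vg=\gamma_\rho$, and since $\gamma_\rho\in\Gamma$ we may take $g\in K^\x$; the dominant part of $(P_{+a_\rho})_{\x g}$ then lies in $\bm k\{Y\}\subseteq\bm k_L\{Y\}$ as the same nonzero polynomial, because $\fd_{(P_{+a_\rho})_{\x g}}\in K^\x$. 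I do not foresee a substantial obstacle—each clause is a routine unwinding—the only point requiring mild care is (7), where one must verify that the dominant part is intrinsic under the extension $K\subseteq L$.
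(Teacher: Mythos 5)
Your proposal is correct and follows essentially the same route as the paper: items (1)--(5) and (7) by routine unwinding of the definition via the basic conjugation and multiplicativity properties of $\ddeg$ and $\ddeg_{\ges\gamma}$ from \cite[\S 6.6]{adamtt}, and item (6) by applying Lemma~\ref{ddegroot} to $P_{+a_\rho}$ at $\ell-a_\rho\preccurlyeq g$ with $vg=\gamma_\rho$. The paper merely states the routine items without detail, so your write-up is a faithful (and slightly more explicit) version of the intended argument.
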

\begin{proof}
Items (1), (2), (4), and (5) follow routinely from basic facts about dominant degree, (3) follows from (2), and (7) is obvious.  

For (6), let $\ell$ be a pseudolimit of $(a_\rho)$ in an extension of $K$ with $P(\ell)=0$. Take $\rho_0$ such that, for all $\rho \ges \rho_0$, $v(\ell-a_\rho)=\gamma_\rho \in \Gamma$ and 
$d\big(P,(a_\rho)\big)=\ddeg_{\ges \gamma_\rho}P_{+a_\rho}$. Let $\rho\ges \rho_0$ and set $Q=P_{+a_\rho}$. Then $Q(\ell-a_\rho)=0$, so $\ddeg Q_{\times g} \ges 1$ for any $g \in K$ with $vg = \gamma_\rho$, by Lemma~\ref{ddegroot}.
\end{proof}

\subsection*{Coarsening and specialization}\label{subsec:coarsespec}
Coarsening and specializing are central to the arguments in this paper, so we review the definitions here.
Details and proofs can be found in \cite[\S3.4]{adamtt}.

Let $\Delta \neq \{0\}$ be a proper convex subgroup of $\Gamma$.
Then we have another valuation on $K$, 
\begin{align*}
v_\Delta \colon K^\x &\to \Gamma/\Delta\\
a &\mapsto v(a)+\Delta.
\end{align*}
We denote $K$ with this valuation by $K_\Delta$ and call it the \emph{coarsening of $K$ by $\Delta$}.
Set $\dot v \coloneqq v_\Delta$, $\dot\Gamma \coloneqq \Gamma/\Delta$, and $\dot\gamma \coloneqq \gamma+\Delta$, so if $v(a)=\gamma$, then $\dot v(a)=\dot \gamma$.
The valuation ring of $K_\Delta$ is
$$\dot{\ca O}\ =\ \{a\in K:\ \dot v(a) \ges 0\}\ =\ \{a\in K:\  v(a) \ges \delta \text{ for some } \delta \in \Delta\}\ \supseteq\ \ca O$$
with maximal ideal
$$\dot\cao\ =\ \{a\in K:\ \dot v(a)>0\}\ =\ \{a\in K:\  v(a) > \Delta\}\ \subseteq\ \cao.$$
With the same derivation, $\der$, $K_\Delta$ is a valued differential field.
By \cite[Corollary~4.4.4]{adamtt}, $\der\dot\cao \subseteq \dot\cao$, so $K_\Delta$ has small derivation.
Of course, $\dot\Gamma \neq \{0\}$, so $K_\Delta$ satisfies our standing assumptions.

Its residue field, $\dot K=\dot{\ca O}/\dot\cao$, is also a valued field, called the \emph{specialization of $K$ to $\Delta$}.
For $a \in \dot{\ca O}$, let $\dot a=a+\dot\cao$.
Then the valuation $v \colon \dot{K}^\x \to \Delta$ is defined by $v(\dot a)=v(a)$, if $a \in \dot{\ca O}\setminus\dot\cao$.
Note that $\dot K$ is also a differential field because $K_\Delta$ has small derivation, and $\dot K$ has small derivation because $K$ does.
As $\Delta \neq \{0\}$, $\dot K$ also satisfies our standing assumptions.

Some pc-sequences in $K$ remain pc-sequences after coarsening or specializing. To discuss this,
we say $(a_\rho)$ is \emph{$\Delta$-fluent} if for some index $\rho_0$ we have $\gamma_{\rho'}-\gamma_\rho > \Delta$, for all $\rho'> \rho > \rho_0$; in that case $(a_\rho)$ is still a pc-sequence in $K_\Delta$.
We say $(a_\rho)$ is \emph{$\Delta$-jammed} if for some index $\rho_0$ we have $\gamma_{\rho'}-\gamma_\rho\in \Delta$, for all $\rho'> \rho > \rho_0$.
If $a_\rho \in \dot{\ca O}$ and $\gamma_\rho \in \Delta$, eventually, then $(a_{\rho})$ is $\Delta$-jammed and
$(\dot{a}_\rho)$ is a pc-sequence in $\dot K$, where by convention
we drop the indices $\rho$ for which $a_{\rho}\notin \dot{\ca O}$.
If $(a_\rho)$ is not $\Delta$-jammed, then it has a $\Delta$-fluent cofinal subsequence.

Let $\ddeg^\Delta P$ be the dominant degree of $P$ with respect to the valuation $\dot v=v_\Delta$ on $K$.
Here is how the dominant degree in a cut behaves under coarsening:

\begin{lem}\label{ddegcoarsen}
Suppose $(a_\rho)$ is $\Delta$-fluent.
Let $\bm a=c_K(a_\rho)$ and $\bm a_\Delta=c_{K_\Delta}(a_\rho)$.
Then $$\ddeg_{\bm a} P\ \les\ \ddeg_{\bm a_\Delta}^\Delta P.$$
\end{lem}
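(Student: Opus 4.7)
The plan is to reduce the lemma to the pointwise statement
\[
\ddeg R\ \les\ \ddeg^\Delta R \qquad \text{for every } R\in K\{Y\}^{\neq}.
\]
Granting this, I fix $\rho$ large enough that $\ddeg_{\ges \gamma_\rho} P_{+a_\rho} = \ddeg_{\bm a} P$; $\Delta$-fluency ensures that $(a_\rho)$ remains a pc-sequence in $K_\Delta$ with $\dot\gamma_\rho$ strictly increasing in $\dot\Gamma$, so I may further arrange that $\ddeg_{\ges \dot\gamma_\rho}^\Delta P_{+a_\rho} = \ddeg_{\bm a_\Delta}^\Delta P$. Picking $g\in K^\x$ with $vg=\gamma_\rho$ (hence $\dot vg=\dot\gamma_\rho$) and applying the pointwise statement to $R=(P_{+a_\rho})_{\x g}$ then yields
\[
\ddeg_{\bm a}P\ =\ \ddeg(P_{+a_\rho})_{\x g}\ \les\ \ddeg^\Delta(P_{+a_\rho})_{\x g}\ =\ \ddeg_{\bm a_\Delta}^\Delta P.
\]

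For the pointwise statement, the crucial observation is that a coefficient $\fd$ of $R$ of minimum $v$-value also has minimum $\dot v$-value: if some coefficient $c$ satisfied $\dot v(c)<\dot v(\fd)$, then $v(\fd)-v(c)$ would exceed every element of $\Delta$, in particular $0$, contradicting the minimality of $v(\fd)$. So the same $\fd$ serves as dominant monomial coefficient for both valuations.

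Now $\fd^{-1}R$ has all coefficients in $\ca O\subseteq \dot{\ca O}$. Reducing modulo $\dot\cao$ produces $D_R^\Delta\in\dot K\{Y\}$ whose coefficients in fact lie in the valuation ring $\ca O/\dot\cao$ of $\dot K$, and the further reduction modulo $\cao/\dot\cao$ recovers $D_R\in\bm k\{Y\}$. Hence $D_R$ is obtained from $D_R^\Delta$ by a coefficientwise residue map, which cannot raise total degree: $\deg D_R\les \deg D_R^\Delta$, i.e., $\ddeg R\les \ddeg^\Delta R$.

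The only real subtlety is keeping straight the tower $\ca O\subseteq \dot{\ca O}$ of valuation rings and the corresponding residue fields $\bm k$ and $\dot K$; once one notes that reduction factors as $\ca O\twoheadrightarrow \ca O/\dot\cao \twoheadrightarrow \bm k$ and that a single $\fd$ works for both valuations, the inequality is essentially automatic.
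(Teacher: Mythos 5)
Your proof is correct and follows essentially the same route as the paper: both reduce the lemma, for suitably large $\rho$ and $g \in K^\x$ with $vg=\gamma_\rho$, to the pointwise inequality $\ddeg (P_{+a_\rho})_{\x g}\les\ddeg^\Delta (P_{+a_\rho})_{\x g}$. The only difference is that the paper cites this inequality as a basic property of dominant degree under coarsening, whereas you supply the (correct) verification that a coefficient of minimal $v$-value also has minimal $\dot v$-value and that the residue map factors through $\ca O/\dot\cao$, so the reduction can only lower the degree of the dominant part.
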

\begin{proof} For $g\in K^\times$ with $vg= \gamma_{\rho}$ and
$Q \coloneqq  P_{+a_{\rho}}$ we have
$$
\ddeg_{\ges \dot{\gamma}_\rho}^\Delta Q\ =\ \ddeg^\Delta Q_{\times g}\ \ges\ \ddeg Q_{\times g}\ =\  \ddeg_{\ges \gamma_\rho} Q.$$
It follows that $\ddeg_{\bm a} P \les \ddeg^\Delta_{\bm a_\Delta} P$.
\end{proof}

\begin{lem}\label{ddegcoarsenconj}
Suppose $P \in \dot{\ca O}\{Y\} \setminus \dot{\cao}\{Y\}$, $b \in \dot{\ca O}$, and $h \in \dot{\ca O} \setminus \dot{\cao}$.
Then
$$\ddeg \dot{P}_{+\dot{b}}\ =\ \ddeg P_{+b}\ \text{and}\ \ddeg \dot{P}_{\times\dot{h}}\ =\ \ddeg P_{\times h}.$$
\end{lem}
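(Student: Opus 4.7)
My plan is to reduce both identities to a single claim:

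\begin{quote}
\emph{Claim.} For any $Q\in\dot{\ca O}\{Y\}\setminus\dot\cao\{Y\}$, one has $\ddeg\dot Q=\ddeg Q$.
\end{quote}

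Granting the claim, the first equality in the lemma follows by taking $Q=P_{+b}$, provided I verify three things: (i) $P_{+b}\in\dot{\ca O}\{Y\}$, which holds because $\der\dot{\ca O}\subseteq\dot{\ca O}$ (since $K_\Delta$ has small derivation, by \cite[Lemma 4.4.2]{adamtt}), so all iterated derivatives $b^{(j)}$ lie in $\dot{\ca O}$ and expanding $P(b+Y)$ keeps coefficients in $\dot{\ca O}$; (ii) reduction commutes with additive conjugation, i.e.\ $\dot{P_{+b}}=\dot P_{+\dot b}$, which is immediate since the map $\dot{\ca O}\{Y\}\to\dot K\{Y\}$ is a differential ring homomorphism fixing $Y$ and sending $b^{(j)}\mapsto\dot b^{(j)}$; and (iii) $P_{+b}\notin\dot\cao\{Y\}$, which is equivalent to $\dot P_{+\dot b}\neq 0$ and follows from $\dot P\neq 0$ combined with the fact that additive conjugation is a bijection on $\dot K\{Y\}$. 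The second equality is handled identically with $Q=P_{\times h}$, using multiplicative conjugation in place of additive, and using that $\dot h\in\dot K^{\times}$ since $h\notin\dot\cao$.

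The heart of the argument is the claim itself, and the key input is the natural identification of the residue field of $\dot K$ (viewed as a valued field with $v\colon\dot K^{\times}\to\Delta$) with $\bm k$. Indeed, its valuation ring is the image of $\ca O$ in $\dot K$, namely $\ca O/\dot\cao$, and its maximal ideal is $\cao/\dot\cao$, so the residue field is $(\ca O/\dot\cao)/(\cao/\dot\cao)=\ca O/\cao=\bm k$. To prove the claim, I pick $\fd=\fd_Q$ to be a coefficient of $Q$ of minimum valuation. Because $Q\notin\dot\cao\{Y\}$, some coefficient has $v$-value in $\Delta$; all coefficients lie in $\dot{\ca O}$, so their $v$-values are either in $\Delta$ or strictly above $\Delta$, and hence the minimum is attained in $\Delta$. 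Thus $\fd\in\dot{\ca O}\setminus\dot\cao$, so $\dot\fd\in\dot K^{\times}$ is a coefficient of $\dot Q$ of minimum $v$-value and may serve as $\fd_{\dot Q}$. Now $\fd^{-1}Q$ lies in $\ca O\{Y\}$ and its reduction mod $\cao$ is $D_Q\in\bm k\{Y\}$; on the other hand, $\dot\fd^{-1}\dot Q=\dot{(\fd^{-1}Q)}$ lies in the valuation ring of $\dot K$, and reducing it mod the maximal ideal of $\dot K$ yields $D_{\dot Q}$. Under the two-step reduction identification above, these two reductions are literally the same element of $\bm k\{Y\}$, so $D_Q=D_{\dot Q}$ and thus $\ddeg Q=\ddeg\dot Q$.

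The main obstacle is nothing more than careful bookkeeping: keeping track of which valuation ring, maximal ideal, and residue field each object lives in, and checking that the two successive quotients $\dot{\ca O}\twoheadrightarrow\dot K\twoheadrightarrow\bm k$ agree with the single quotient $\ca O\twoheadrightarrow\bm k$ on the relevant subrings. Once that identification is set up, the content of the claim is essentially tautological, and the reduction of the full lemma to the claim is routine manipulation of conjugation.
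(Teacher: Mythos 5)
Your proposal is correct and follows essentially the same route as the paper: the paper's proof also consists of (a) the identity $\dot{(P_{+b})}=\dot P_{+\dot b}$ (resp.\ for $\times h$), cited from \cite[Lemma 4.3.1]{adamtt}, and (b) the observation that $\ddeg Q=\ddeg\dot Q$ for $Q\in\dot{\ca O}\{Y\}\setminus\dot\cao\{Y\}$, which is exactly your Claim. You simply supply the bookkeeping (the two-step residue identification and the unit status of $\fd_Q$) that the paper leaves to the reader.
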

\begin{proof}
By \cite[Lemma~4.3.1]{adamtt}, we have $\dot{P}_{+\dot{b}} = \dot{(P_{+b})}$ and $\dot{P}_{\times\dot{h}} = \dot{(P_{\times h})}$.
It remains to note that $\ddeg Q = \ddeg \dot{Q}$, for all $Q \in \dot{\ca O}\{Y\} \setminus \dot{\cao}\{Y\}$.
\end{proof}

\begin{cor}\label{ddegspecialize}
Suppose $P \in \dot{\ca O}\{Y\} \setminus \dot\cao\{Y\}$, and $a_\rho \in \dot{\ca O}$, $\gamma_\rho \in \Delta$, eventually.
Let $\bm a=c_K(a_\rho)$ and $\dot{\bm a}=c_{\dot K}(\dot{a}_\rho)$.
Then $\ddeg_{\bm a} P = \ddeg_{\dot{\bm a}} \dot{P}.$
\end{cor}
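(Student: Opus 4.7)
The plan is to reduce the claim to the key fact behind Lemma \ref{ddegcoarsenconj}, namely that $\ddeg Q = \ddeg \dot Q$ for every $Q \in \dot{\ca O}\{Y\}\setminus \dot{\cao}\{Y\}$. The idea is to express both $\ddeg_{\bm a} P$ and $\ddeg_{\dot{\bm a}} \dot P$ as the ordinary dominant degree of a common multiplicative conjugate of $P_{+a_\rho}$, and then invoke the lemma after passing to residues.

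Since $\gamma_\rho \in \Delta$ eventually, $(a_\rho)$ is $\Delta$-jammed, so $(\dot a_\rho)$ is a pc-sequence in $\dot K$ (dropping indices where $a_\rho \notin \dot{\ca O}$), and $v(\dot a_{\rho+1}-\dot a_\rho)=\gamma_\rho$. I would first fix $\rho$ large enough that $a_\rho \in \dot{\ca O}$, $\gamma_\rho \in \Delta$, and both equalities $\ddeg_{\bm a} P = \ddeg_{\ges \gamma_\rho} P_{+a_\rho}$ and $\ddeg_{\dot{\bm a}} \dot P = \ddeg_{\ges \gamma_\rho} \dot P_{+\dot a_\rho}$ hold. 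Next, pick $h \in K^\times$ with $vh = \gamma_\rho$; since $\gamma_\rho \in \Delta$, this gives $\dot v(h) = 0$, so $h \in \dot{\ca O}\setminus\dot{\cao}$ and $v(\dot h) = \gamma_\rho$ in $\dot K$. Set $Q \coloneqq (P_{+a_\rho})_{\times h}$. Then $\ddeg_{\ges \gamma_\rho} P_{+a_\rho} = \ddeg Q$ by definition, while \cite[Lemma 4.3.1]{adamtt} yields $\dot Q = (\dot P_{+\dot a_\rho})_{\times \dot h}$, so also $\ddeg_{\ges \gamma_\rho} \dot P_{+\dot a_\rho} = \ddeg \dot Q$.

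The remaining task, and the only real obstacle, is to verify $Q \in \dot{\ca O}\{Y\}\setminus \dot{\cao}\{Y\}$. Membership in $\dot{\ca O}\{Y\}$ follows from $P, a_\rho, h \in \dot{\ca O}$ together with $\der \dot{\ca O} \subseteq \dot{\ca O}$. To rule out $Q \in \dot{\cao}\{Y\}$, note that $\dot P \neq 0$ in $\dot K\{Y\}$ by hypothesis on $P$, and that both $R \mapsto R_{+\dot a_\rho}$ and $R \mapsto R_{\times \dot h}$ (the latter because $\dot h \neq 0$) are automorphisms of the domain $\dot K\{Y\}$; hence $\dot Q = (\dot P_{+\dot a_\rho})_{\times \dot h} \neq 0$, so $Q \notin \dot{\cao}\{Y\}$. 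Therefore $\ddeg Q = \ddeg \dot Q$, and combining with the identifications above gives $\ddeg_{\bm a} P = \ddeg_{\dot{\bm a}} \dot P$.
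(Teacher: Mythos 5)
Your proof is correct and follows essentially the same route as the paper: express both sides as $\ddeg$ of the conjugate $(P_{+a_\rho})_{\times h}$ with $vh=\gamma_\rho\in\Delta$ and reduce to the fact that $\ddeg Q=\ddeg\dot Q$ for $Q\in\dot{\ca O}\{Y\}\setminus\dot\cao\{Y\}$. The only difference is that you unfold Lemma~\ref{ddegcoarsenconj} (via \cite[Lemma 4.3.1]{adamtt}) instead of citing it, and you explicitly check that the conjugate stays in $\dot{\ca O}\{Y\}\setminus\dot\cao\{Y\}$ — a point the paper leaves implicit.
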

\begin{proof}
For $g \in K^\times$ with $vg=\gamma_\rho\in \Delta$ we have
$$\ddeg_{\ges \gamma_\rho} \dot{P}_{+\dot{a}_\rho}\ =\ \ddeg \dot{P}_{+\dot{a}_\rho, \times \dot{g}}\  \text{ and }\
\ddeg_{\ges \gamma_\rho} P_{+a_\rho}\ =\ \ddeg P_{+a_\rho, \times g},$$
so the desired result follows from Lemma~\ref{ddegcoarsenconj}.
\end{proof}

\section{The differential-henselian configuration property}\label{sec:dhc}
In this section, we assume that the induced derivation on $\bm k$ is nontrivial.
This makes available tools from \cite[\S6.8 and \S6.9]{adamtt} on constructing immediate extensions, which are fundamental to our results.
We introduce here the \emph{differential-henselian configuration property}.
In \cite[Proposition~7.4.1]{adamtt}, the authors proved that monotone valued differential fields have this property, from which they deduced the uniqueness of maximal immediate extensions of monotone fields, and of $\d$-algebraically maximal $\d$-algebraic immediate extensions of monotone fields.
In \S\ref{sec:mainresults}, we show that those results depend only on the differential-henselian configuration property, not on monotonicity.

The goal then of this section is to prove that a valued differential field with value group of finite rank has the differential-henselian configuration property, in Corollary~\ref{dhcfinrank}.
Corollary~\ref{dhcfinrankunion} extends this to valued differential fields whose value group is the union of its finite rank convex subgroups.
The theorems in the introduction then follow from the results of \S4 combined with Corollary~\ref{dhcfinrankunion}.

Recall that a pc-sequence in $K$ is \emph{divergent} if it has no pseudolimit in $K$.

\begin{defn}
We say $K$ has the \emph{differential-henselian configuration property} (\emph{dh-configuration property} for short) if, for every divergent pc-sequence $(a_\rho)$ in $K$ with minimal differential polynomial $G(Y)$ over $K$, we have $\ddeg_{\bm a} G=1$.
\end{defn}

\noindent
Rephrasing \cite[Proposition~7.4.1]{adamtt} gives that monotone $K$ have the dh-configuration property. There, the assumption on $G$ was weaker, but this form was all that was necessary for the consequences mentioned above.
If $\Gamma$ has finite rank, then we call the number of nontrivial convex subgroups its \emph{rank}.
By \cite[Corollary~6.1.2]{adamtt}, if $\Gamma$ has rank 1, then $K$ is monotone (assuming small derivation, as we are).
Thus if $\Gamma$ has rank 1, then $K$ has the dh-configuration property.
This will be the base case for an inductive proof of Corollary~\ref{dhcfinrank}.

And now, we examine how the dh-configuration property relates to coarsening and specialization.
To distinguish between pseudoconvergence in $K$ and in a coarsening of $K$ with valuation $\dot v$, we write $\pconv_v$ for the former and $\pconv_{\dot v}$ for the latter.
We use $\pconv$ for pseudoconvergence in both $K$ and specializations of $K$.

\begin{prop}\label{dhcglue}
Let $\Delta \neq \{0\}$ be a proper convex subgroup of $\Gamma$.
Let $K_\Delta$ be the coarsening of $K$ by $\Delta$ and $\dot K$ be the specialization of $K$ to $\Delta$.
Suppose that $K_\Delta$ and $\dot K$ have the dh-configuration property.
Then so does $K$.
\end{prop}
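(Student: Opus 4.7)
The plan is to split into two cases based on whether $(a_\rho)$ has a $\Delta$-fluent cofinal subsequence or is eventually $\Delta$-jammed, reducing respectively to the dh-configuration property of $K_\Delta$ via Lemma~\ref{ddegcoarsen} and of $\dot K$ via Corollary~\ref{ddegspecialize}. Let $(a_\rho)$ be a divergent pc-sequence in $K$ with minimal differential polynomial $G$. Lemma~\ref{ddegbasic}(6), applied at any pseudolimit of $(a_\rho)$ in a maximal immediate extension, gives $\ddeg_{\bm a}G \ges 1$, so the task is to prove $\ddeg_{\bm a}G \les 1$.

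In the $\Delta$-fluent case, I would pass to a $\Delta$-fluent cofinal subsequence (which preserves both the cut and the minimal differential polynomial), so that $(a_\rho)$ becomes a pc-sequence in $K_\Delta$. The key observation is a cofinality bridge: under $\Delta$-fluency, for every $P \in K\{Y\}^{\neq}$ the sequence $v(P(a_\rho))$ is strictly increasing and cofinal in $\Gamma$ if and only if $\dot v(P(a_\rho))$ is strictly increasing and cofinal in $\Gamma/\Delta$. The nontrivial implication uses that $\dot v(x) > \dot v(y)$ means $v(x) > v(y) + \Delta$ (forcing strict increase in $\Gamma$), and that for any $\gamma \in \Gamma$ one can choose a coset $\dot\gamma^* > \dot\gamma$ in $\Gamma/\Delta$ (supplying cofinality). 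Consequently $(a_\rho)$ is divergent in $K_\Delta$ -- any $K_\Delta$-pseudolimit would already be a $K$-pseudolimit -- and $P(a_\rho) \pconv_v 0$ iff $P(a_\rho) \pconv_{\dot v} 0$; so $G$ is a candidate over $K_\Delta$, any $K_\Delta$-minimal candidate is a $K$-candidate, and a complexity comparison forces $G$ itself to be minimal over $K_\Delta$. The dh-configuration property of $K_\Delta$ now gives $\ddeg^\Delta_{\bm a_\Delta}G = 1$, and Lemma~\ref{ddegcoarsen} concludes.

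In the $\Delta$-jammed case, Lemma~\ref{ddegbasic}(2,4) permits replacing $(a_\rho)$ by $(a_\rho - a_{\rho_0})/h$ with $vh = \gamma_{\rho_0}$ for a large $\rho_0$, and correspondingly $G$ by $G_{+a_{\rho_0},\x h}$; after a scalar normalization, $G \in \dot{\ca O}\{Y\}\setminus\dot\cao\{Y\}$, $a_\rho \in \dot{\ca O}\setminus\dot\cao$, and $\gamma_\rho \in \Delta$ eventually, so $(\dot a_\rho)$ is a pc-sequence in $\dot K$. Lifting any hypothetical pseudolimit in $\dot K$ back to $\dot{\ca O}\setminus\dot\cao$ and using that the parameter of $(\dot a_\rho)$ in $\dot K$ equals $\gamma_\rho$ shows $(\dot a_\rho)$ is divergent in $\dot K$. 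By Corollary~\ref{ddegspecialize}, $\ddeg_{\bm a}G = \ddeg_{\dot{\bm a}}\dot G$, so the goal becomes $\ddeg_{\dot{\bm a}}\dot G \les 1$.

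The main obstacle is that $\dot G$ need not itself be minimal over $\dot K$: cofinality of $v(G(a_\rho))$ in $\Gamma$ forces $v(G(a_\rho)) > \Delta$ eventually, so $\dot G(\dot a_\rho) = 0$ eventually, while a minimal $\dot H$ for $(\dot a_\rho)$ over $\dot K$ of strictly smaller complexity could lift to some $\tilde H$ whose values on $(a_\rho)$ are cofinal only in $\Delta$, not in $\Gamma$, and thus fail to contradict the minimality of $G$ over $K$. My plan is a Ritt-style pseudo-division of $\dot G$ by $\dot H$: with $I$ the initial of $\dot H$, one obtains $I^N\dot G = A\dot H + B$ with $B$ of strictly smaller complexity than $\dot H$; since $\dot G$ and $\dot H$ both vanish eventually on $(\dot a_\rho)$, so does $B$, and minimality of $\dot H$ forces $B = 0$. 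One checks that $I(\dot a_\rho)$ is eventually a nonzero constant (else $I$ would be an annihilator of smaller complexity than $\dot H$), giving $\ddeg_{\dot{\bm a}}I = 0$; Lemma~\ref{ddegbasic}(5) combined with the dh-configuration property of $\dot K$ applied to $\dot H$ then yields $\ddeg_{\dot{\bm a}}\dot G = \ddeg_{\dot{\bm a}}A + 1$. The crux -- and the hardest step -- is showing $\ddeg_{\dot{\bm a}}A = 0$: were $A(\dot\ell) = 0$ at some pseudolimit $\dot\ell$, iteration of the pseudo-division would give $I^M\dot G = A'\dot H^k$ with $k \ges 2$ and $A'(\dot\ell) \ne 0$, and I would attack this by lifting the factorization back to $K\{Y\}$ and exploiting the multiplicity -- via a separant-or-derivative argument in combination with the minimality of $G$ -- to manufacture a $K$-candidate for $(a_\rho)$ of complexity strictly less than $c(G)$, producing the required contradiction.
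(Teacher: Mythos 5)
Your overall architecture---splitting into the $\Delta$-fluent and $\Delta$-jammed cases and reducing to the dh-configuration property of $K_\Delta$ via Lemma~\ref{ddegcoarsen} and of $\dot K$ via Corollary~\ref{ddegspecialize}---is exactly the paper's, as are the lower bound $\ddeg_{\bm a}G\ges 1$ and the normalization in the jammed case. But both cases have a genuine gap, and it is the same missing ingredient in each: you never control $v\big(G(a_\rho)\big)$ in terms of $\gamma_\rho$. The paper does this by passing to an equivalent sequence $(b_\rho)$ via \cite[Lemma~6.8.3]{adamtt} and decomposing $P=G_{+a}$ into homogeneous parts, which produces an $e\ges 1$ with $v\big(G(b_\rho)\big)=v_{P_e}(\gamma_\rho)=v(P_e)+e\gamma_\rho+o(\gamma_\rho)$ eventually. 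In Case 1 your ``cofinality bridge'' is misstated: $G(a_\rho)\pconv_v 0$ says only that $v\big(G(a_\rho)\big)$ is eventually strictly increasing, not that it is cofinal in $\Gamma$ (and in general it is not---$\Delta$-fluency of $(\gamma_\rho)$ does not give cofinality in $\Gamma$). The direction you actually need, that $v$-pseudoconvergence of $G(b_\rho)$ to $0$ yields $\dot v$-pseudoconvergence, is the nontrivial one: it requires the increments $v\big(G(b_{\rho'})\big)-v\big(G(b_\rho)\big)=e(\gamma_{\rho'}-\gamma_\rho)+o(\gamma_{\rho'}-\gamma_\rho)$ to lie above $\Delta$, which comes from the displayed estimate plus $\Delta$-fluency; your stated justification only covers the easy converse direction.

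Case 2 starts from a false premise. You assert that $v\big(G(a_\rho)\big)$ is cofinal in $\Gamma$, hence eventually $>\Delta$, so that $\dot G(\dot a_\rho)=0$ eventually. The opposite happens: after arranging $v(a_\rho)=0$, $\gamma_\rho\in\Delta^{>}$, and $v(G)=0$, one shows $v(P_e)\in\Delta$ (otherwise the choice of $e$ is contradicted via \cite[Corollary~6.1.5]{adamtt}), whence $v\big(G(b_\rho)\big)=v(P_e)+e\gamma_\rho+o(\gamma_\rho)\in\Delta$ eventually. Thus $\dot G(\dot b_\rho)$ is a sequence of \emph{nonzero} elements of $\dot K$ pseudoconverging to $0$, $\dot G$ is itself a minimal differential polynomial of $(\dot a_\rho)$ over $\dot K$ (minimality transfers directly using \cite[Lemma~3.4.1]{adamtt} and a complexity comparison), and the hypothesis on $\dot K$ applies to $\dot G$ with no further work. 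Your Ritt pseudo-division detour is therefore aimed at a non-problem, and in any case you concede that its crux, showing $\ddeg_{\dot{\bm a}}A=0$, is only a plan of attack rather than an argument. The repair for both cases is the same: establish the eventual identity $v\big(G(b_\rho)\big)=v_{P_e}(\gamma_\rho)$ for a suitable equivalent sequence $(b_\rho)$ and read off its consequences under $\Delta$-fluency and $\Delta$-jamming respectively.
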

\begin{proof}
Let $(a_\rho)$ be a pc-sequence in $K$ with minimal differential polynomial $G(Y)$ over $K$.
We need to show that $\ddeg_{\bm a} G = 1$. By the definition of {\em minimal differential polynomial\/} in \cite[p.\ 227]{adamtt} we can
replace $(a_\rho)$ by an equivalent pc-sequence to arrange that $G(a_\rho)\pconv 0$. 

At this point we distinguish between 
Case 1 and Case 2 below, but first we indicate a construction that
is needed in both cases and which depends only on the assumptions and arrangements that are now in place.
Using \cite[Lemma~6.9.3]{adamtt} we obtain a pseudolimit $a$ of $(a_\rho)$ in an immediate extension $K\langle a \rangle$ of $K$ with $G(a)=0$.
Hence $\ddeg_{\bm a} G \ges 1$ by Lemma~\ref{ddegbasic} (6), so it is enough to show that $\ddeg_{\bm a} G \les 1$.
Let $P=G_{+a}$, so $\deg P = \deg G$ by \cite[Corollary~4.3.2]{adamtt}. Set $\gamma_{\rho} \coloneqq v(a_{\rho+1}-a_{\rho})$.  

By \cite[Lemma~6.8.1]{adamtt} and the remarks after it,
we have a pc-sequence $(b_\rho)$ equivalent to $(a_\rho)$ such that:
\renewcommand{\labelenumi}{(\roman{enumi})}
\begin{enumerate}
\item $G(b_\rho) \pconv 0$,
\item $v(b_{\rho+1}-b_\rho)=v(b_\rho-a)=\gamma_\rho$, eventually, and
\item $v\big(G(b_\rho)\big)=v\big(P(b_\rho-a)\big)=v_{P}(\gamma_\rho)$, eventually.
\end{enumerate}
\renewcommand{\labelenumi}{(\arabic{enumi})}
Then $\bm a=c_K(b_\rho)$, as $(b_\rho)$ is equivalent to $(a_\rho)$.  
From \cite[Corollary~6.1.10]{adamtt} and $P(0)=0$ we obtain an $e$ with $1 \les e \les \deg P$ such that $P_e \neq 0$ and
$$v\big(G(b_\rho)\big)\ =\ v_P(\gamma_\rho)\ =\ \min_{d} v_{P_d}(\gamma_\rho)\ =\ v_{P_e}(\gamma_\rho),\ \text{eventually}.$$
Then \cite[Corollary~6.1.3]{adamtt} gives, for all sufficiently large $\rho$ and all $\rho'>\rho$,
\begin{equation}\tag{$*$}
v\big(G(b_{\rho'})\big)-v\big(G(b_{\rho})\big)\ =\ v_{P_e}(\gamma_{\rho'}) - v_{P_e}(\gamma_\rho)\ =\ e(\gamma_{\rho'}-\gamma_\rho) + o(\gamma_{\rho'}-\gamma_\rho).
\end{equation}

\medskip\noindent
{\em Case 1:\ $(a_\rho)$ is not $\Delta$-jammed}. 
Then $(a_\rho)$ has a $\Delta$-fluent cofinal subsequence. 
Replacing $(a_\rho)$ by such a subsequence we arrange that 
$(a_\rho)$ is $\Delta$-fluent, preserving $G(a_{\rho})\leadsto 0$. Next we do the construction above of 
$a, (b_{\rho}), P, e$.  Note that then
$(b_\rho)$ is also $\Delta$-fluent by (ii).

We claim that $\big(G(b_\rho)\big)$ is $\Delta$-fluent, that is, for all sufficiently large $\rho$ and all $\rho'>\rho$,
$$v\big(G(b_{\rho'+1})-G(b_{\rho'})\big)-v\big(G(b_{\rho+1})-G(b_{\rho})\big)\ >\ \Delta.$$
Since $G(b_\rho) \pconv 0$, $v\big(G(b_\rho)\big)$ is eventually strictly increasing,
and thus $v\big(G(b_{\rho+1})-G(b_{\rho})\big)=v\big(G(b_\rho)\big)$, eventually.
Hence it is enough to show that, for all sufficiently large
$\rho$ and all $\rho'>\rho$,
$$v\big(G(b_{\rho'})\big)-v\big(G(b_{\rho})\big)\ >\ \Delta.$$
This inequality holds by $(*)$, since $\gamma_{\rho'}-\gamma_\rho > \Delta$, eventually. 
So $\big(G(b_\rho)\big)$ is $\Delta$-fluent and $G(b_\rho) \pconv_{\dot{v}} 0$.

Next we show that $G$ remains a minimal differential polynomial of $(b_\rho)$ over $K_\Delta$.
So let $(e_\lambda)$ be a pc-sequence in $K_\Delta$ that is equivalent to $(a_\rho)$ (with respect to $\dot v$) and suppose $H \in K\{Y\}$ is such that $H(e_\lambda) \pconv_{\dot v} 0$.
Since $\dot{v}$ is a coarsening of $v$, \cite[Lemmas 2.2.21 and 2.2.17 (iii)]{adamtt} give that $(e_\lambda)$ is a pc-sequence in $K$ that is equivalent to $(b_\rho)$ (with respect to $v$).
From $H(e_\lambda) \pconv_{\dot v} 0$ we get $H(e_\lambda) \pconv_v 0$, and hence $c(H) \ges c(G)$. As $K_{\Delta}$ has the dh-configuration property, we obtain
 $\ddeg^\Delta_{\bm a_\Delta} G=1$, so $\ddeg_{\bm a} G \les 1$ by Lemma~\ref{ddegcoarsen}, as desired.

\medskip\noindent
{\em Case 2:\ $(a_\rho)$ is $\Delta$-jammed}.
Take an index $\rho_0$ so large that, for all $\rho'>\rho \ges \rho_0$, 
$$\gamma_\rho \in \Gamma,\quad v(a_{\rho'}-a_{\rho})=\gamma_{\rho},\quad \gamma_{\rho'}>\gamma_\rho, \text{ and }\gamma_\rho-\gamma_{\rho_0} \in \Delta.$$
Take $g \in K$ with $vg=\gamma_{\rho_0}$.
Then, replacing $(a_\rho)$ by $((a_\rho-a_{\rho_0})/g)$ and $G$ by $G_{+a_{\rho_0}, \times g}$, we arrange that $v(a_\rho)=0$ and $\gamma_\rho \in \Delta^{>}$, eventually, preserving $G(a_{\rho})\leadsto 0$. 
This is possible by Lemma~\ref{ddegbasic} (2) and (4).
By scaling $G$ we also arrange $v(G) =0\in \Delta$.
At this point we do the construction above 
of $a$, $(b_\rho)$, $P$, $e$. From $v(a)=0$ and (ii) we get
$v(b_\rho)=0$, eventually. 

We claim that $\dot G(\dot b_\rho)$ is a pc-sequence in $\dot K$ with $\dot G(\dot b_\rho) \pconv 0$.
First, by \cite[Lemma~4.5.1]{adamtt}, $v(P)=0$. This yields $v(P_e) \in \Delta$: otherwise, $v(P_e)>\Delta$ and so by taking $d \neq e$ with $v(P_d)=0$ we obtain $v_{P_e}(\gamma_\rho)>v_{P_d}(\gamma_\rho)$, eventually, by \cite[Corollary~6.1.5]{adamtt} and
because $\gamma_\rho \in \Delta$, eventually; but this contradicts the choice of $e$. Next, from \cite[Corollary~6.1.3]{adamtt} and $v(P_e)\in \Delta$ we get
$$ v\big(G(b_\rho)\big)\ =\ v_{P_e}(\gamma_\rho)\ =\ v(P_e)+e\gamma_\rho + o(\gamma_\rho)\ \in\ \Delta,\ \text{eventually},$$
so $v\big(\dot{G}(\dot{b}_\rho)\big)=v(P_e)+e\gamma_\rho + o(\gamma_\rho)$,
eventually, and thus $\dot{G}(\dot{b}_\rho)$ is a pc-sequence in $\dot K$ with $\dot{G}(\dot{b}_\rho) \pconv 0$.

Finally, we show that $\dot G$ is a minimal differential polynomial of $(\dot a_\rho)$ over $\dot K$. 
So let $(e_{\lambda})$ be a well-indexed sequence in $\dot{\ca O}$
such that
$(\dot{e}_\lambda)$ is a pc-sequence in $\dot K$ equivalent to $(\dot{a}_\rho)$, and thus to $(\dot{b}_\rho)$, and let $H \in \dot{\ca O}\{Y\}$ be such that $\dot H \in \dot K\{Y\}^{\ne}$,
$\dot{H}(\dot{e}_\lambda) \pconv 0$, and $c(H)=c(\dot H)$.  Then \cite[Lemma~3.4.1]{adamtt} gives $H(e_\lambda) \pconv 0$.
Moreover, $(e_\lambda)$ is equivalent to $(b_\rho)$, so $c(H) \ges c(G)$, and thus $c(\dot H) \ges c(\dot G)$.
The hypothesis on $\dot{K}$ therefore yields $\ddeg_{\dot{\bm a}} \dot G = 1$, and so $\ddeg_{\bm a} G = 1$ by Corollary~\ref{ddegspecialize}.
\end{proof}

\noindent
Note that the part of the proof preceding Case 1 does not use the assumptions on $K_{\Delta}$ and
$\dot{K}$, that Case 1 only uses the assumption on $K_{\Delta}$, and Case 2 only the assumption on $\dot{K}$. 

\begin{cor}\label{dhcfinrank}
Suppose $\Gamma$ has finite rank.
Then $K$ has the dh-configuration property.
\end{cor}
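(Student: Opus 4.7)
The plan is to induct on the rank $n$ of $\Gamma$. The standing assumption that the valuation is nontrivial means $n \ges 1$, so there is no issue with $n = 0$. For the base case $n = 1$, the text already observes that by \cite[Corollary 6.1.2]{adamtt} a small-derivation field with rank-$1$ value group is monotone, and monotone fields have the dh-configuration property (the rephrasing of \cite[Proposition 7.4.1]{adamtt} noted just after the definition).

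For the inductive step, assume $n \ges 2$ and that the corollary holds for all $K$ whose value group has rank strictly less than $n$. Because $\Gamma$ has finite rank $n$, its convex subgroups form a finite chain
\[
\{0\}\ =\ \Gamma_0\ \subsetneq\ \Gamma_1\ \subsetneq\ \cdots\ \subsetneq\ \Gamma_n\ =\ \Gamma,
\]
so we may choose a nontrivial proper convex subgroup $\Delta = \Gamma_k$ with $1 \les k \les n-1$. The specialization $\dot K$ has value group $\Delta$ with convex subgroups $\Gamma_0,\dots,\Gamma_k$, hence rank $k < n$. The coarsening $K_\Delta$ has value group $\Gamma/\Delta$ whose convex subgroups correspond to those of $\Gamma$ containing $\Delta$, giving rank $n-k < n$. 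Both $\dot K$ and $K_\Delta$ satisfy the standing hypotheses (as observed in the discussion of coarsening and specialization), and the induced derivation on their common residue field $\bm k$ remains nontrivial, so the inductive hypothesis applies to each.

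By the inductive hypothesis, $K_\Delta$ and $\dot K$ both have the dh-configuration property. Proposition~\ref{dhcglue} then gives that $K$ has it as well, completing the induction.

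There is no real obstacle here beyond bookkeeping: all the heavy lifting is already done in Proposition~\ref{dhcglue}, and the only point to verify is that a single coarsening/specialization step strictly decreases the rank, which is immediate from the chain structure of convex subgroups of a finite-rank ordered abelian group.
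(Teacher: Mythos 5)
Your proof is correct and follows exactly the paper's own argument: induct on the rank, pick a nontrivial proper convex subgroup $\Delta$, observe that the coarsening and specialization have strictly smaller rank, and apply Proposition~\ref{dhcglue}. (One small imprecision: the residue field of $K_\Delta$ is $\dot K$, not $\bm k$; but since the derivation induced on $\bm k$ is nontrivial, so is the one on $\dot K$, so the standing hypotheses do hold for both.)
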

\begin{proof}
By induction on the rank of $\Gamma$.
If $\Gamma$ has rank 1, then $K$ is monotone, so has the dh-configuration property.
If $\Gamma$ has rank $n>1$, then it has a proper convex subgroup $\Delta \neq \{0\}$. 
Both $\Gamma/\Delta$ and $\Delta$ have rank $<n$, so the coarsening of $K$ by $\Delta$ and the specialization of $K$ to $\Delta$ have the dh-configuration property by the induction hypothesis.
Then Proposition~\ref{dhcglue} gives the result.
\end{proof}

\begin{defn}
An ordered abelian group $G$ has the \emph{differential-henselian configuration property} (\emph{dh-configuration property} for short) if every valued differential field with small derivation, nontrivial induced derivation on its residue field, and value group $G$ 
has the dh-configuration property.
\end{defn}

\noindent
Thus any $G$ of finite rank has the dh-configuration property by Corollary~\ref{dhcfinrank}. This property is inherited by ``convex'' unions, but for that we need the following:

\begin{lem}\label{dhccauchy}
Let $(a_\rho)$ be a pc-sequence in $K$ of width $\{\infty\}$ and 
$G(Y)$ a minimal differential polynomial of $(a_{\rho})$ over $K$.
Then $\ddeg_{\bm a} G=1.$
\end{lem}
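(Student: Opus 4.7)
The plan is to reuse the setup from the proof of Proposition~\ref{dhcglue} and to exploit that width $\{\infty\}$ makes $(\gamma_\rho)$ cofinal in $\Gamma$.

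First I would replace $(a_\rho)$ by an equivalent pc-sequence so that $G(a_\rho) \pconv 0$, and then apply \cite[Lemma 6.9.3]{adamtt} to obtain a pseudolimit $a$ of $(a_\rho)$ in an immediate extension $K\langle a \rangle$ with $G(a)=0$. This already gives $\ddeg_{\bm a} G \ges 1$ by Lemma~\ref{ddegbasic}(6). Setting $P \coloneqq G_{+a}$, so $P_0 = 0$, the same construction as in Proposition~\ref{dhcglue} yields an equivalent pc-sequence $(b_\rho)$ in $K$ satisfying (i)--(iii), together with an integer $e$, $1 \les e \les \deg P$, such that $P_e \neq 0$ and $v\big(G(b_\rho)\big) = v_{P_e}(\gamma_\rho)$ eventually.

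The key new step will be to use minimality of $G$ to force $e = 1$. Suppose for contradiction $P_1 = 0$. The Taylor identity
$$P_1\ =\ \sum_{i} \frac{\partial G}{\partial Y^{(i)}}(a) \cdot Y^{(i)}$$
then forces $\frac{\partial G}{\partial Y^{(i)}}(a) = 0$ for every $i$. Not every $\frac{\partial G}{\partial Y^{(i)}}$ can vanish (else $G$ would be a nonzero constant in $K$, incompatible with $G(a_\rho) \pconv 0$), so I can fix some $H \coloneqq \frac{\partial G}{\partial Y^{(i)}} \neq 0$, and a direct check of the complexity triple yields $c(H) < c(G)$. Since width $\{\infty\}$ makes $\gamma_\rho = v(b_\rho - a)$ cofinal in $\Gamma$, $(b_\rho) \pconv a$ in $K\langle a\rangle$; applying $H$ and using the standard continuity of differential polynomials for pc-sequences (rooted in \cite[Lemma 4.4.6]{adamtt}) gives $H(b_\rho) \pconv H(a) = 0$. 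As $(b_\rho)$ is equivalent to $(a_\rho)$, this contradicts minimality of $G$.

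With $e = 1$ in hand, the upper bound $\ddeg_{\bm a} G \les 1$ follows from the asymptotics used in Proposition~\ref{dhcglue}. Indeed, $\ddeg_{\bm a} G = \ddeg G_{+a_\rho, \times g_\rho}$ for any $g_\rho$ with $vg_\rho = \gamma_\rho$ and $\rho$ large, and by \cite[Corollary 6.1.3]{adamtt} one has $v_{(G_{+a_\rho})_d}(\gamma_\rho) = v(P_d) + d\gamma_\rho + o(\gamma_\rho)$ eventually for each $d \ges 1$ with $P_d \neq 0$, while the $d = 0$ contribution has valuation $v\big(G(a_\rho)\big) \to \infty$. Cofinality of $\gamma_\rho$ in $\Gamma$ forces the minimum of these valuations to be uniquely attained at the least $d \ges 1$ with $P_d \neq 0$, namely $d = e = 1$, so $\ddeg G_{+a_\rho, \times g_\rho} = 1$ eventually. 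I expect the hardest point to be the continuity step in the middle paragraph, i.e., transferring $v(b_\rho - a) \to \infty$ to $H(b_\rho) \pconv H(a)$ in a way that genuinely produces a pc-convergence statement rather than merely valuation-topological convergence.
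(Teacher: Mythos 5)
Your overall architecture is the paper's: produce a root $a$ of $G$ with $a_\rho \pconv a$ in an immediate extension, show $P_1 \neq 0$ for $P = G_{+a}$ using minimality of $G$, and then use cofinality of $(\gamma_\rho)$ in $\Gamma$ together with the asymptotics of $v_{P_d}(\gamma_\rho)$ from \cite[Corollary 6.1.5]{adamtt} to see that the dominant part of $P_{\times g}$ is concentrated in degree $1$. Your last step is essentially correct, modulo a small conflation of $G_{+a_\rho}$ with $G_{+a}$ (this is exactly what \cite[Corollary 6.6.6]{adamtt} is for, and working with $P=G_{+a}$ also disposes of the degree-$0$ term outright, since $P_0=G(a)=0$).

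The gap is where you predicted it. From $v(b_\rho - a)=\gamma_\rho$ cofinal in $\Gamma$ and continuity of $y\mapsto H(y)$ you get $v\big(H(b_\rho)\big)\to\infty$ in the valuation topology, but that is not the statement $H(b_\rho)\pconv 0$: pseudoconvergence to $0$ requires $v\big(H(b_\rho)\big)$ to be \emph{eventually strictly increasing}, and topological convergence permits oscillation. To contradict minimality you would still need to extract a cofinal (hence equivalent) subsequence along which this valuation is strictly increasing. The paper short-circuits all of this with \cite[Lemma 6.8.1]{adamtt}, which is precisely the device for converting ``$H(a)=0$'' into ``$H(b_\rho)\pconv 0$ for some pc-sequence equivalent to $(a_\rho)$,'' and it does so without invoking cofinality. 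The paper also makes a more economical choice of $H$: it takes $H=\partial G/\partial Y^{(r)}$ with $r$ the order of $G$, which is nonzero by definition of order and has $c(H)<c(G)$ since the degree in $Y^{(r)}$ drops; concluding $\big(\partial G/\partial Y^{(r)}\big)(a)\neq 0$ then yields $P_1\neq 0$ directly, with no contradiction hypothesis and no case analysis over all partials. So: right route, fixable gap, but as written the continuity step does not deliver a pc-sequence statement and should be replaced by an appeal to \cite[Lemma 6.8.1]{adamtt}.
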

\begin{proof} As in the proof of Proposition~\ref{dhcglue} we arrange
$G(a_\rho)\pconv 0$, take a pseudolimit $a$ of $(a_\rho)$ in an immediate extension 
of $K$ with $G(a)=0$,
and set
$P \coloneqq G_{+a}$ (with $\deg P = \deg G$), and $\gamma_{\rho} \coloneqq  v(a_{\rho+1}-a_{\rho})$. We also arrange that $(\gamma_{\rho})$
is strictly increasing and $v(a-a_{\rho})=\gamma_{\rho}$ for all $\rho$. 
We claim that $P_1\ne 0$. To prove this claim, let $r$ be the order of $G$. Then $\partial G/\partial Y^{(r)} \neq 0$. Hence $\big(\partial G/\partial Y^{(r)}\big)(a) \neq 0$, since otherwise \cite[Lemma~6.8.1]{adamtt} would give a pc-sequence $(b_\rho)$ in $K$ equivalent to $(a_\rho)$ such that
$\frac{\partial G}{\partial Y^{(r)}}(b_\rho)\ \pconv\ 0$,
contradicting the minimality of $G$.
In view of $P_1 = \sum_{i=0}^{r} \frac{\partial G}{\partial Y^{(i)}}(a) \cdot Y^{(i)}$, this gives $P_1 \neq 0$. 
 
For $\ddeg_{\bm a} G=1$, it is enough by \cite[Corollary~6.6.6]{adamtt} that $\ddeg G_{+a, \x g}=1$ for some $\rho$ and $g \in K^\x$ with $v(g) = \gamma_\rho$.
As $(P_d)_{\x g}=(G_{+a, \x g})_d$ for $g\in K^\times$, it suffices to show that for all $d>1$ with $P_d \neq 0$ we have
$v_{P_d}(\gamma_\rho)>v_{P_1}(\gamma_\rho)$, eventually.
Now $P_1 \neq 0$, so if $d>1$ and $P_d \neq 0$, then
$$v_{P_d}(\gamma_\rho)-v_{P_1}(\gamma_\rho)\ =\ v(P_d)-v(P_1)+(d-1)\gamma_\rho + o(\gamma_\rho),$$
by \cite[Corollary~6.1.5]{adamtt}, and thus $v_{P_d}(\gamma_\rho)>v_{P_1}(\gamma_\rho)$, eventually, 
since $(\gamma_\rho)$ is cofinal in $\Gamma$.
\end{proof}

\begin{lem}\label{dhcunionlem}
Suppose $\Gamma$ is a union of convex subgroups with the dh-configuration property.
Then $\Gamma$ has the dh-configuration property.
\end{lem}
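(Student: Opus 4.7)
The plan is to split on the width of $(a_\rho)$, appealing to Lemma~\ref{dhccauchy} in one case and to the Case 2 argument in the proof of Proposition~\ref{dhcglue} in the other.

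Let $K$ be a valued differential field with small derivation, nontrivial induced derivation on $\bm k$, and value group $\Gamma = \bigcup_{i \in I} \Gamma_i$ with each $\Gamma_i$ a convex subgroup having the dh-configuration property. If some $\Gamma_i$ equals $\Gamma$ the conclusion is immediate, so I assume every $\Gamma_i$ is proper in $\Gamma$. Let $(a_\rho)$ be a divergent pc-sequence in $K$ with minimal differential polynomial $G$ over $K$, and set $\gamma_\rho \coloneqq v(a_{\rho+1} - a_\rho)$; the goal is $\ddeg_{\bm a} G = 1$. First, if $(a_\rho)$ has width $\{\infty\}$, Lemma~\ref{dhccauchy} delivers $\ddeg_{\bm a} G = 1$ directly, so it remains to handle the case that the width of $(a_\rho)$ is not $\{\infty\}$.

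In that case choose $\gamma^* \in \Gamma$ with $\gamma^* > \gamma_\rho$ for all $\rho$, and $\rho_0$ so that $(\gamma_\rho)_{\rho \ges \rho_0}$ is strictly increasing. Since convex subgroups of $\Gamma$ are linearly ordered by inclusion, some $\Gamma_i$ contains both $\gamma^*$ and $\gamma_{\rho_0}$; call it $\Delta$. By convexity, $[\gamma_{\rho_0}, \gamma^*] \subseteq \Delta$, so $\gamma_\rho \in \Delta$ for all $\rho \ges \rho_0$, and hence $(a_\rho)$ is $\Delta$-jammed. Since $\gamma_{\rho_0} < \gamma^*$, $\Delta \neq \{0\}$; and $\Delta$ is proper in $\Gamma$ with the dh-configuration property. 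The specialization $\dot K$ of $K$ to $\Delta$ is then a valued differential field with small derivation, nontrivial induced derivation on its residue field $\bm k$, and value group $\Delta$, so $\dot K$ itself has the dh-configuration property.

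Now I would run the Case 2 argument of the proof of Proposition~\ref{dhcglue} verbatim with this $\Delta$: normalize to arrange $v(a_\rho) = 0$, $\gamma_\rho \in \Delta^{>}$ eventually, and $v(G) = 0$; produce a pseudolimit $a$ of $(a_\rho)$ in an immediate extension with $G(a) = 0$ and pass to an equivalent pc-sequence $(b_\rho)$ with the three listed properties; identify $e \ges 1$ with $P_e \neq 0$ and $v(P_e) \in \Delta$; and conclude that $(\dot b_\rho)$ is a pc-sequence in $\dot K$ with $\dot G(\dot b_\rho) \pconv 0$ and that $\dot G$ is a minimal differential polynomial of $(\dot a_\rho)$ over $\dot K$. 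The dh-configuration property of $\dot K$ then yields $\ddeg_{\dot{\bm a}} \dot G = 1$, and Corollary~\ref{ddegspecialize} transfers this back to $\ddeg_{\bm a} G = 1$, as desired.

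The genuinely new point beyond Proposition~\ref{dhcglue} is the choice of $\Delta$: since we have no hypothesis on $\Gamma/\Delta$ to handle the $\Delta$-fluent regime, I sidestep it entirely by choosing $\Delta$ large enough to trap the tail $(\gamma_\rho)_{\rho \ges \rho_0}$ inside it, which is possible precisely when the width is not $\{\infty\}$; Lemma~\ref{dhccauchy} disposes of the remaining width-$\{\infty\}$ case. The only obstacle I anticipate is a bookkeeping check: the Case 2 normalization and pc-sequence manipulations of Proposition~\ref{dhcglue} must go through with the present $\Delta$, which they do because, as the remark after that proposition observes, Case 2 uses only the dh-configuration property of $\dot K$ and nothing about the coarsening $K_\Delta$.
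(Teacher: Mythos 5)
Your proof is correct and follows essentially the same route as the paper: the paper case-splits on whether $(a_\rho)$ is $\Delta_i$-jammed for some $i$ (handled by the Case 2 argument of Proposition~\ref{dhcglue}) or not (in which case $(\gamma_\rho)$ is cofinal in $\Gamma$ and Lemma~\ref{dhccauchy} applies), and your split on the width being $\{\infty\}$ or not is the same dichotomy approached from the contrapositive direction, with the same convexity argument producing the jamming subgroup $\Delta$. The only difference is presentational: you make explicit the (correct) observations that $\dot K$ inherits the dh-configuration property from $\Delta$ and that Case 2 needs nothing about $\Gamma/\Delta$, which the paper leaves to the remark following Proposition~\ref{dhcglue}.
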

\begin{proof}
Let $\Gamma = \bigcup_{i \in I} \Delta_i$ for some index set $I$, where each $\Delta_i$ is a nontrivial convex subgroup of $\Gamma$ with the dh-configuration property.
The case that $\Gamma=\Delta_i$ for some $i \in I$ is trivial, so suppose $\Gamma \neq \Delta_i$ for each $i \in I$.
Let $(a_\rho)$ be a pc-sequence in $K$ with a minimal differential polynomial $G(Y)$ over $K$.
Set $\bm a \coloneqq c_K(a_\rho)$ and $\gamma_\rho \coloneqq v(a_{\rho+1}-a_\rho)$;
we can assume that $(\gamma_{\rho})$ is strictly increasing.

If $(a_\rho)$ is $\Delta_i$-jammed for some $i$, then $\ddeg_{\bm a} G=1$ by the argument from Case 2 in the proof of Proposition~\ref{dhcglue}. 
If $(a_\rho)$ is not $\Delta_i$-jammed for any $i$, then 
$(\gamma_\rho)$ is cofinal in $\Gamma$, so $\ddeg_{\bm a} G=1$ by Lemma~\ref{dhccauchy}.
\end{proof}

\begin{cor}\label{dhcfinrankunion}
If $\Gamma$ is the union of its finite rank convex subgroups, then $\Gamma$ has the dh-configuration property.\footnote{Moreover, Allen Gehret has pointed out that if the convex subgroups of $\Gamma$ are well-ordered by inclusion, then it follows from Proposition~\ref{dhcglue} and Lemma~\ref{dhcunionlem} by transfinite induction that $\Gamma$ has the dh-configuration property.}
\end{cor}

\section{Main results}\label{sec:mainresults}
We now show how the desired results follow from the dh-configuration property, without other assumptions on $\Gamma$.
Theorem~\ref{dhtodalgmax} is stated for $K$ with the dh-configuration property, while Theorems \ref{maximmed} and \ref{dhensel} are stated for $\Gamma$ with the dh-configuration property.
(In fact, all that is needed in the latter two theorems is that
all immediate extensions of $K$ have the dh-configuration property.)
The results claimed in the introduction then follow in view of Corollary~\ref{dhcfinrankunion}.
We first use the dh-configuration property to find pseudolimits of pc-sequences that are also zeroes of their minimal differential-polynomials.
The argument is the same as in \cite[Lemma~7.4.2]{adamtt}.

\begin{lem}\label{dalgmaxplimroot}
Suppose $K$ has the dh-configuration property.
Let $(a_\rho)$ be a divergent pc-sequence in $K$ with minimal differential polynomial $G(Y)$ over $K$.
Let $L$ be a $\d$-algebraically maximal extension of $K$ such that $\bm k_L$ is linearly surjective.
Then $a_\rho \pconv b$ and $G(b)=0$ for some $b\in L$.
\end{lem}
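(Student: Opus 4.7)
\emph{Proof plan.} First I would replace $(a_\rho)$ with an equivalent pc-sequence to arrange $G(a_\rho)\pconv 0$, using the definition of minimal differential polynomial exactly as in the opening step of the proof of Proposition~\ref{dhcglue}; this leaves the cut $\bm a$ unchanged. Since $K$ has the dh-configuration property and $(a_\rho)$ is divergent in $K$ with minimal differential polynomial $G$, we immediately get $\ddeg_{\bm a}G = 1$.

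Next, applying \cite[Lemma 6.9.3]{adamtt} to $K$ (as in the proof of Proposition~\ref{dhcglue}), I obtain a pseudolimit $a$ of $(a_\rho)$ in an immediate $\d$-algebraic extension $K\langle a\rangle$ of $K$ with $G(a)=0$. This is the ``target'' element; the remainder of the plan is to realize a copy of this $a$ inside $L$.

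For that, view $(a_\rho)$ as a pc-sequence in $L$. Because $G\in K\{Y\}\subseteq L\{Y\}$ satisfies $G(a_\rho)\pconv 0$, the sequence $(a_\rho)$ is of $\d$-algebraic type over $L$, with some minimal differential polynomial $\tilde G$ over $L$ of complexity at most $c(G)$. Since $\bm k_L$ is linearly surjective, another application of \cite[Lemma 6.9.3]{adamtt}---this time to $L$---yields an immediate $\d$-algebraic extension $L\langle b\rangle$ of $L$ with $a_\rho\pconv b$ and $\tilde G(b)=0$. The $\d$-algebraic maximality of $L$ forces $L\langle b\rangle = L$, so $b\in L$ is a pseudolimit of $(a_\rho)$. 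It then remains to upgrade from $\tilde G(b)=0$ to $G(b)=0$.

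The main obstacle is precisely this last upgrade. The dh-configuration data $\ddeg_{\bm a}G=1$ is what makes it go through: it constrains the minimal differential polynomial of $(a_\rho)$ over any extension tightly enough to force $\tilde G$ to coincide with $G$ (up to units) and the pseudolimit-zero produced by Lemma 6.9.3 in $L$ to match the element $a$ constructed in $K\langle a\rangle$, thereby giving $G(b)=0$. This is exactly the role that monotonicity played in the parallel \cite[Lemma 7.4.2]{adamtt}, and the argument should transfer verbatim once $\ddeg_{\bm a}G=1$ is in hand.
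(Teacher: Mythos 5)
Your setup is fine up to the point where you obtain a pseudolimit of $(a_\rho)$ in $L$: as in the paper, since $L$ is $\d$-algebraically maximal and the derivation of $\bm k_L$ is nontrivial, \cite[Lemma 6.9.3]{adamtt} applied to $L$ forces every pc-sequence of $\d$-algebraic type over $L$ to have a pseudolimit in $L$. The genuine gap is your final ``upgrade'' from $\tilde G(b)=0$ to $G(b)=0$, which is the entire content of the lemma and which your sketch does not supply. The claim that $\ddeg_{\bm a}G=1$ forces the minimal differential polynomial $\tilde G$ of $(a_\rho)$ over $L$ to coincide with $G$ up to units cannot be right: once $(a_\rho)$ has a pseudolimit $a\in L$, the polynomial $Y-a\in L\{Y\}$ already witnesses $\d$-algebraic type, so $\tilde G$ has complexity $(0,1,1)$ and is essentially $Y-a$; the zero of $\tilde G$ produced by \cite[Lemma 6.9.3]{adamtt} is then just $a$ itself, and nothing so far gives $G(a)=0$. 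So your argument produces a pseudolimit in $L$ but no zero of $G$.

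The missing mechanism is $\d$-henselianity of $L$. The paper first notes that $L$ is $\d$-henselian by \cite[Theorem 7.0.1]{adamtt}. After arranging $G(a_\rho)\pconv 0$ and taking a pseudolimit $a\in L$ with $v(a-a_\rho)=\gamma_\rho$ eventually, the equality $\ddeg_{\bm a}G=1$ yields $\ddeg G_{+a,\times g_\rho}=1$ eventually (for $g_\rho\in K$ with $v(g_\rho)=\gamma_\rho$, via \cite[Corollary 6.6.6]{adamtt}). Writing $G(a+Y)=G(a)+A(Y)+R(Y)$ with $A$ linear homogeneous and $R$ of degree $\ges 2$, this translates into $v(G(a))\ges v_A(\gamma_\rho)<v_R(\gamma_\rho)$ eventually, hence $v(G,a)>v(a-a_\rho)$ eventually, with $v(G,a)$ as in \cite[\S7.3]{adamtt}. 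That is exactly the hypothesis needed to run the Newton-type refinement \cite[Lemma 7.3.1]{adamtt} in the $\d$-henselian field $L$, which produces $b\in L$ with $G(b)=0$ and $v_L(a-b)=v(G,a)>v(a-a_\rho)$, whence $a_\rho\pconv b$. Without this quantitative step and the appeal to \cite[Lemma 7.3.1]{adamtt}, there is no way to manufacture a zero of $G$ itself inside $L$.
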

\begin{proof}
Note that $L$ is $\d$-henselian by \cite[Theorem~7.0.1]{adamtt}.
Since $L$ is $\d$-algebraically maximal and the derivation of $\bm k_L$ is nontrivial, every pc-sequence in $L$ of $\d$-algebraic type over $L$ has a pseudolimit in $L$ by \cite[Lemma~6.9.3]{adamtt}.
Thus we  get $a \in L\setminus K$ with $a_\rho \pconv a$. Passing to
an equivalent pc-sequence we arrange that $G(a_\rho) \pconv 0$.
With $\gamma_\rho =v(a_{\rho+1}-a_{\rho})=v(a-a_\rho)$, eventually, the dh-configuration property gives $g_\rho \in K$ with $v(g_\rho)=\gamma_\rho$ and $\ddeg G_{+a_\rho, \times g_\rho}=1$, eventually.
By \cite[Corollary~6.6.6]{adamtt}, $\ddeg G_{+a, \times g_\rho}=1$, eventually.
We have $G(a+Y)=G(a)+A(Y)+R(Y)$ where $A$ is linear and homogeneous and all monomials in $R$ have degree $\ges 2$, and so
$$G_{+a, \times g_{\rho}}(Y)\ =\ G(a)+A_{\times g_{\rho}}(Y) + R_{\times g_{\rho}}(Y).$$ 
Now $\ddeg G_{+a,\times g_{\rho}}=1$ eventually, so
$v(G(a)) \ges v_A(\gamma_\rho)<v_R(\gamma_\rho)$ eventually.
With $v(G,a)$ as in \cite[\S7.3]{adamtt} we get $v(G,a)>v(a-a_\rho)$, eventually.
Then \cite[Lemma~7.3.1]{adamtt} gives $b \in L$ with $v_L(a-b)=v(G,a)$ and $G(b)=0$, so $v_L(a-b)>v(a-a_\rho)$ eventually. Thus $a_\rho \pconv b$.
\end{proof}

\begin{thm}\label{maximmed}
Suppose $\bm k$ is linearly surjective and $\Gamma$ has the dh-configuration property.
Then any two maximal immediate extensions of $K$ are isomorphic over $K$.
Also, any two $\d$-algebraically maximal $\d$-algebraic immediate extensions of $K$ are isomorphic over $K$.
\end{thm}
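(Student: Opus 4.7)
The plan is to run a (one-sided) back-and-forth between the two given extensions, modelled on the monotone case \cite[Theorem 7.4.3]{adamtt} but with the dh-configuration property --- via Lemma~\ref{dalgmaxplimroot} --- taking the place of the ingredient supplied there by monotonicity. I treat both assertions in parallel: let $L_1, L_2$ be either both maximal, or both $\d$-algebraically maximal $\d$-algebraic, immediate extensions of $K$, and choose by Zorn's lemma a maximal pair $(E, \iota)$ such that $K \subseteq E \subseteq L_1$ is a valued differential subfield and $\iota \colon E \to L_2$ is a valued differential field embedding over $K$. Since each $L_i$ is immediate over $K$, the intermediate $E$ inherits $\Gamma_E = \Gamma$ and $\bm k_E = \bm k$, so $\bm k_E$ is linearly surjective, $\Gamma_E$ has the dh-configuration property, and $L_1$ is immediate over $E$ (and dually $L_2$ over $\iota(E)$).

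Suppose toward contradiction that $E \subsetneq L_1$. Standard theory of immediate extensions (see \cite[\S3.2]{adamtt}) yields a pc-sequence $(a_\rho)$ in $E$ with no pseudolimit in $E$ but with $a_\rho \pconv a$ for some $a \in L_1 \setminus E$. In the $\d$-algebraic setting every element of $L_1$ is $\d$-algebraic over $K \subseteq E$, so $(a_\rho)$ is of $\d$-algebraic type over $E$; in the maximal setting both $\d$-transcendental and $\d$-algebraic types may occur.

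If $(a_\rho)$ is of $\d$-transcendental type (possible only in the maximal setting), then maximality of $L_2$ provides a pseudolimit $b \in L_2$ of $\iota((a_\rho))$, and the uniqueness of immediate $\d$-transcendental extensions from \cite[\S6.9]{adamtt} extends $\iota$ to an embedding $E\langle a \rangle \to L_2$ sending $a \mapsto b$. If instead $(a_\rho)$ has a minimal differential polynomial $G \in E\{Y\}$, then Lemma~\ref{dalgmaxplimroot} applied with $E$ in place of $K$ and $L_1$ in place of $L$ (legal since $L_1$ is $\d$-algebraically maximal and $\bm k_{L_1} = \bm k$ is linearly surjective) produces $a^* \in L_1 \setminus E$ with $a_\rho \pconv a^*$ and $G(a^*) = 0$; applied to $\iota(E) \subseteq L_2$ it produces $b \in L_2$ with $\iota((a_\rho)) \pconv b$ and $\iota(G)(b) = 0$. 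The uniqueness of the immediate extension generated by a pseudolimit that is also a zero of a minimal differential polynomial (again from \cite[\S6.9]{adamtt}) then extends $\iota$ to an embedding $E\langle a^* \rangle \to L_2$ over $K$ sending $a^* \mapsto b$. Either alternative contradicts the maximality of $(E, \iota)$, so $E = L_1$ and $\iota$ embeds $L_1$ into $L_2$. Finally, $\iota(L_1) = L_2$ because $L_2$ is an immediate extension of $\iota(L_1) \cong L_1$ (in the second assertion, a $\d$-algebraic immediate extension) while $L_1$ is itself (resp.\ $\d$-algebraically) maximal.

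The main obstacle is the $\d$-algebraic step: this is precisely where the dh-configuration property is genuinely needed, through Lemma~\ref{dalgmaxplimroot}, to produce inside both $L_1$ and $L_2$ a pseudolimit of $(a_\rho)$ that is also a zero of $G$. Once those pseudolimits-with-$G$-vanishing are in hand, the extension step, as well as the comparatively routine $\d$-transcendental case, follows from the existing uniqueness results for immediate extensions in \cite[\S6.9]{adamtt}.
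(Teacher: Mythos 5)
Your proposal is correct and follows essentially the same route as the paper: a Zorn-maximal partial embedding over $K$, a divergent pc-sequence witnessing properness, the split into $\d$-transcendental and $\d$-algebraic type with Lemma~\ref{dalgmaxplimroot} handling the latter, and the extension lemmas of \cite[\S6.9]{adamtt} to contradict maximality. The only differences (phrasing via an embedding rather than an isomorphism of subfields, and selecting the pseudolimit $a\in L_1\setminus E$ before the pc-sequence) are cosmetic.
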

\begin{proof}
Let $L_0$ and $L_1$ be maximal immediate extensions of $K$.
By Zorn we have a maximal isomorphism $\mu \colon F_0 \cong_K F_1$ between valued differential subfields $F_i\supseteq K$ of $L_i$ for $i=0,1$, where ``maximal'' means that $\mu$ does not extend to
an isomorphism between strictly larger such valued differential
subfields. 
Suppose towards a contradiction that $F_0 \neq L_0$ (equivalently, $F_1 \neq L_1$).
Then $F_0$ is not spherically complete, so we have a divergent pc-sequence $(a_\rho)$ in $F_0$.

Suppose $(a_\rho)$ is of $\d$-transcendental type over $F_0$. The spherical completeness of $L_0$ and $L_1$ then
yields $f_0 \in L_0$ and $f_1 \in L_1$
such that $a_\rho \pconv f_0$ and $\mu(a_\rho) \pconv f_1$. 
Hence by \cite[Lemma~6.9.1]{adamtt} we obtain an isomorphism $F_0\langle f_0 \rangle \cong F_1\langle f_1 \rangle$ extending
$\mu$, contradicting the maximality of $\mu$.

Suppose $(a_\rho)$ is of $\d$-algebraic type over $F_0$, with minimal differential polynomial $G$ over $F_0$.
Then Lemma~\ref{dalgmaxplimroot} gives $f_0 \in L_0$ with $a_\rho \pconv f_0$ and $G(f_0)=0$, and $f_1 \in L_1$ with $\mu(a_\rho) \pconv f_1$ and $G^\mu(f_1)=0$.
Now \cite[Lemma~6.9.3]{adamtt} gives an isomorphism $F_0\langle f_0 \rangle \cong F_1\langle f_1 \rangle$ extending $\mu$,
and we have again a contradiction. Thus $F_0=L_0$ and
hence $F_1=L_1$ as well.

The proof of the second statement is the same, using only \cite[Lemma~6.9.3]{adamtt}.
\end{proof}

\noindent
In the case of few constants, we have the following additional results.
The first has essentially the same proof as \cite[Theorem~7.0.3]{adamtt}.

\begin{thm}\label{dhtodalgmax}
Suppose $K$ has the dh-configuration property.
Let $L$ be a $\d$-henselian extension of $K$ with few constants.
Let $(a_\rho)$ be a pc-sequence in $K$ with minimal differential polynomial $G(Y)$ over $K$.
Then $(a_\rho)$ has a pseudolimit in $L$.
In particular, if $K$ itself is $\d$-henselian and has few constants, then it is $\d$-algebraically maximal.
\end{thm}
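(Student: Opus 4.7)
The plan is to mimic the proof of Lemma~\ref{dalgmaxplimroot}, but to produce the pseudolimit in $L$ via $\d$-henselianity rather than by invoking $\d$-algebraic maximality of $L$, which we do not have. First, by passing to an equivalent pc-sequence we arrange $G(a_\rho) \pconv 0$. The dh-configuration property of $K$ then yields $\ddeg_{\bm a} G = 1$, so for each sufficiently large $\rho$ and any $g_\rho \in K^\times$ with $v(g_\rho) = \gamma_\rho$ we have $\ddeg G_{+a_\rho, \times g_\rho} = 1$; fix such $g_\rho$.

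The technical heart is to verify that, for $\rho$ sufficiently large, the normalized differential polynomial $P_\rho := \fd_\rho^{-1} G_{+a_\rho, \times g_\rho} \in \ca O_L\{Y\}$ (where $\fd_\rho \asymp G_{+a_\rho, \times g_\rho}$) satisfies the $\d$-henselian hypothesis in $L$: namely $v(P_{\rho,1}) = 0$, which follows from $\ddeg P_\rho = 1$ together with the eventual equality $v(\fd_\rho) = v\bigl((G_{+a_\rho, \times g_\rho})_1\bigr)$; and $v(P_{\rho,0}) > 0$, which amounts to $v(G(a_\rho)) > v(\fd_\rho)$. The latter inequality I establish by combining $G(a_\rho) \pconv 0$ with the asymptotic growth formula \cite[Corollary 6.1.5]{adamtt} applied to $Q = G_{+a_\rho}$, controlling the linear part via minimality of $G$ (using \cite[Lemma 6.8.1]{adamtt}). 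The few constants hypothesis enters here through asymptoticity of $L$ (see \cite[Lemmas 9.1.1 and 7.1.8]{adamtt}), which keeps the relevant valuations stable under the coordinate changes.

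Once the hypothesis is verified, $\d$-henselianity of $L$ yields $y_\rho \in \cao_L$ with $P_\rho(y_\rho) = 0$, so $b_\rho := a_\rho + g_\rho y_\rho \in L$ is a root of $G$ with $v(b_\rho - a_\rho) > \gamma_\rho$. An argument analogous to the closing step of Lemma~\ref{dalgmaxplimroot}, using a \cite[Lemma 7.3.1]{adamtt}-type conclusion and exploiting that $v(y_\rho) \ges v(P_{\rho,0})$ grows with $\rho$ as $v(G(a_\rho))$ outpaces $v(\fd_\rho)$, then upgrades such a root to a genuine pseudolimit of $(a_\rho)$ in $L$. For the ``in particular'' clause, take $L = K$: then $K$ satisfies all the hypotheses, so every pc-sequence in $K$ of $\d$-algebraic type over $K$ has a pseudolimit in $K$, and by \cite[Lemma 6.9.3]{adamtt} this means $K$ has no proper immediate $\d$-algebraic extension, i.e., $K$ is $\d$-algebraically maximal.

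The main obstacle is the delicate comparison of $v(G(a_\rho))$ with $v(\fd_\rho)$ needed to secure $v(P_{\rho,0}) > 0$, and then extracting an actual pseudolimit (rather than merely a root of $G$ located near a particular $a_\rho$). Both are precisely where the few constants hypothesis, via asymptoticity of $L$, is essential: without it, constants in $L$ could perturb roots of $G$ in ways that prevent stabilization to a single pseudolimit.
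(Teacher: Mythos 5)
There is a genuine gap in the second half of your argument. Your first half is fine and matches the paper: arrange $G(a_\rho)\pconv 0$, use the dh-configuration property to get $\ddeg G_{+a_\rho,\times g_\rho}=1$ eventually, and use $\d$-henselianity of $L$ to produce zeros of $G$ in $L$ close to each $a_\rho$ (the paper does this via \cite[Lemma 7.1.1]{adamtt}, which also spares you the delicate normalization $v(P_{\rho,0})>0$ --- note that $\ddeg=1$ does not by itself force the constant term below the dominant valuation, so you only get $a_\rho - z_\rho \preccurlyeq g_\rho$, not $\prec$). But the step you describe as ``upgrading such a root to a genuine pseudolimit'' is exactly the point where your plan does not close. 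The closing step of Lemma~\ref{dalgmaxplimroot} that you want to imitate applies \cite[Lemma 7.3.1]{adamtt} \emph{at a pseudolimit $a$ of $(a_\rho)$ that already lies in $L$}; that pseudolimit was supplied there by $\d$-algebraic maximality of $L$, which you correctly note is unavailable here. Without such an $a\in L$ there is no point at which to apply Lemma 7.3.1, and the roots $b_\rho$ you construct could a priori drift without any single one of them being a pseudolimit of the whole sequence. You flag this as ``the main obstacle'' but the appeal to asymptoticity of $L$ does not resolve it.

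The paper's proof uses a different and essential idea: argue by contradiction. Assuming $(a_\rho)$ is divergent in $L$, \cite[Lemma 6.9.3]{adamtt} produces a pseudolimit $a$ in a \emph{proper immediate extension} $L\langle a\rangle$ of $L$ with $v(a-a_\rho)=\gamma_\rho$. The zeros $z_\rho\in L$ of $G$ then satisfy $a - z_\rho \preccurlyeq g_\rho$, and since $(\gamma_\rho)$ is cofinal in $v(a-K)$ by \cite[Lemma 2.2.19]{adamtt}, one can select $r+2$ of them, $z_{\rho_0},\dots,z_{\rho_{r+1}}$ (where $r=\operatorname{order}(G)\ges 1$), approximating $a$ at strictly decreasing levels, all with $\ddeg G_{+z_{\rho_i},\times g_{\rho_0}}=1$. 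This contradicts \cite[Lemma 7.5.5]{adamtt}, a pigeonhole-type bound on the number of such nested zeros of a differential polynomial of order $r$ with dominant degree $1$. That ``too many roots'' contradiction is the missing mechanism in your proposal; without it (or a substitute), the passage from local roots to a pseudolimit is unjustified. Your derivation of the ``in particular'' clause from the main statement is essentially correct.
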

\begin{proof}
Towards a contradiction, assume that $(a_\rho)$ is divergent in $L$. Then \cite[Lemma~6.9.3]{adamtt} shows that $G$ has order 
$\ges 1$ (since $L$ is henselian) and provides a proper immediate extension $L\langle a \rangle$ of $L$ with $a_\rho \pconv a$. 
Replacing $(a_\rho)$ by an equivalent pc-sequence in $K$, we arrange that $G(a_\rho) \pconv 0$.

By the dh-configuration property, $\ddeg_{\bm a} G=1$.
Taking $g_\rho \in K$ with $v(g_\rho)=\gamma_\rho$ we have
$\ddeg G_{+a_\rho, \times g_\rho} = 1$, eventually. By removing some initial terms of the sequence, we arrange that this holds for all $\rho$ and that $v(a-a_\rho)=\gamma_\rho$ for all $\rho$.
By $\d$-henselianity and \cite[Lemma~7.1.1]{adamtt}, we have $z_\rho \in L$ with $G(z_\rho)=0$ and $a_\rho - z_\rho \preccurlyeq g_\rho$.
From $a-a_\rho \asymp g_\rho$ we get $a-z_\rho \preccurlyeq g_\rho$.

Let $r\ges 1$ be the order of $G$.
By \cite[Lemma~2.2.19]{adamtt}, $(\gamma_\rho)$ is cofinal in $v(a-K)$, so there are indices $\rho_0<\rho_1<\dots<\rho_{r+1}$ such that $a-z_{\rho_j} \prec a-z_{\rho_i}$, for $0 \les i<j \les r+1$.
Then
$$z_{\rho_i}-z_{\rho_{i-1}}\ \asymp\ a-z_{\rho_{i-1}}\ \succ\ a-z_{\rho_i}\ \asymp\ z_{\rho_{i+1}}-z_{\rho_i},\ \text{for}\ 1 \les i \les r.$$
We have $a-z_{\rho_{r+1}} \prec a-z_{\rho_0} \prece g_{\rho_0}$, so $z_{\rho_0}-z_{\rho_{r+1}} \prece g_{\rho_0}$, and thus also $a_{\rho_0}-z_{\rho_{r+1}} \prece g_{\rho_0}$.
Hence
$$\ddeg G_{+z_{\rho_{r+1}}, \x g_{\rho_0}}\ =\ \ddeg G_{+z_{\rho_0}, \x g_{\rho_0}}\ =\ \ddeg G_{+a_{\rho_0}, \times g_{\rho_0}}\ =\ 1$$
by \cite[Corollary~6.6.6]{adamtt}.
Thus, with $z_{\rho_i}$ in the role of $y_i$, for $0 \les i \les r+1$, and $g_{\rho_0}$ in the role of $g$, we have reached a contradiction with \cite[Lemma~7.5.5]{adamtt}.
\end{proof}

\noindent
Next a result playing the same role in Theorem~\ref{dhensel} as Lemma~\ref{dalgmaxplimroot} played in Theorem~\ref{maximmed}.

\begin{cor}\label{dhplimroot}
If $K$ has the dh-configuration property, $L$ is a $\d$-henselian extension of $K$ with few constants, and
$(a_\rho)$ is a divergent pc-sequence in $K$ with minimal differential polynomial $G(Y)$ over $K$, then
$a_\rho \pconv b$ and $G(b)=0$ for some $b\in L$.
\end{cor}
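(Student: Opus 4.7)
The plan is to adapt the proof of Lemma~\ref{dalgmaxplimroot} almost verbatim, using Theorem~\ref{dhtodalgmax} in place of the step that used $\d$-algebraic maximality of $L$ to produce a pseudolimit. Indeed, the hypotheses of Theorem~\ref{dhtodalgmax} --- $K$ has the dh-configuration property and $L$ is a $\d$-henselian extension of $K$ with few constants --- are exactly what we have here, so applying it to the pc-sequence $(a_\rho)$ with its minimal differential polynomial $G$ yields $a \in L$ with $a_\rho \pconv a$. Since $(a_\rho)$ is divergent in $K$, we have $a \in L \setminus K$.

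Next, replacing $(a_\rho)$ by an equivalent pc-sequence (which preserves the minimality of $G$), I would arrange that $G(a_\rho) \pconv 0$, and after discarding initial terms, that $v(a-a_\rho) = \gamma_\rho \coloneqq v(a_{\rho+1}-a_\rho)$ for all $\rho$. The dh-configuration property gives $\ddeg_{\bm a} G = 1$, so for $g_\rho \in K$ with $v(g_\rho) = \gamma_\rho$ we have $\ddeg G_{+a_\rho, \times g_\rho} = 1$ eventually, and hence $\ddeg G_{+a, \times g_\rho} = 1$ eventually by \cite[Corollary 6.6.6]{adamtt}. Decomposing $G(a+Y) = G(a) + A(Y) + R(Y)$ with $A$ linear homogeneous and all monomials in $R$ of degree $\ges 2$, the dominant-degree-one condition forces $v(G(a)) \ges v_A(\gamma_\rho) < v_R(\gamma_\rho)$ eventually, which in the notation of \cite[\S7.3]{adamtt} reads $v(G,a) > v(a-a_\rho)$ eventually.

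Finally I would invoke \cite[Lemma 7.3.1]{adamtt}, which applies because $L$ is $\d$-henselian, to produce $b \in L$ with $G(b) = 0$ and $v_L(a-b) = v(G,a)$. Then $v_L(a-b) > v(a-a_\rho)$ eventually, so $a_\rho \pconv b$, as desired. Since Theorem~\ref{dhtodalgmax} has already been established and supplies the only substantive new input, no real obstacle remains; the remainder is a direct transcription of the argument in Lemma~\ref{dalgmaxplimroot}.
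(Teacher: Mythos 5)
Your proposal is correct and follows exactly the route the paper takes: its proof of this corollary is literally ``follow the proof of Lemma~\ref{dalgmaxplimroot}, invoking Theorem~\ref{dhtodalgmax} instead of $\d$-algebraic maximality,'' which is precisely what you do. The only difference is that you have written out the transcription in full, and all the steps (the $\d$-henselianity of $L$ needed for \cite[Lemma 7.3.1]{adamtt} now being a hypothesis rather than a consequence of \cite[Theorem 7.0.1]{adamtt}) check out.
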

\begin{proof}
Follow the proof of Lemma~\ref{dalgmaxplimroot}, invoking Theorem~\ref{dhtodalgmax} instead of $\d$-algebraic maximality.
\end{proof}

\noindent
To our knowledge, the following is the first known result about differential-henselizations.

\begin{thm}\label{dhensel}
Suppose $K$ is asymptotic, $\bm k$ is linearly surjective, and $\Gamma$ has the dh-configuration property.
Then $K$ has a $\d$-henselization, and any two $\d$-henselizations of $K$ are isomorphic over~$K$.
\end{thm}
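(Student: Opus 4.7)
The plan is to realise a $\d$-henselization as a $\d$-algebraically maximal $\d$-algebraic immediate extension $K^{\mathrm{da}}$ of $K$, and to deduce both existence and uniqueness from the universal property this choice enjoys. Fix such a $K^{\mathrm{da}}$ (it exists by Zorn and is unique over $K$ by Theorem~\ref{maximmed}). Since $\bm k$ is linearly surjective, $K^{\mathrm{da}}$ is $\d$-henselian by \cite[Theorem 7.0.1]{adamtt}, and so $K^{\mathrm{da}}$ is itself an immediate $\d$-henselian extension of $K$. The task then reduces to showing that $K^{\mathrm{da}}$ embeds over $K$ into every immediate $\d$-henselian extension $M$ of $K$.

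For this, first note that any immediate extension of the asymptotic field $K$ has few constants (a standard preservation fact about asymptotic fields), so in particular $M$ has few constants. By Zorn, pick a maximal embedding $\mu\colon F \to M$ over $K$ with $F$ a valued differential subfield of $K^{\mathrm{da}}$ containing $K$; the aim is $F = K^{\mathrm{da}}$. If $F \neq K^{\mathrm{da}}$, then $F$ lies between $K$ and the immediate extension $K^{\mathrm{da}}$, so $\Gamma_F = \Gamma$ and $\bm k_F = \bm k$, and the immediate $\d$-algebraic extension $F \subsetneq K^{\mathrm{da}}$ yields a divergent pc-sequence $(a_\rho)$ in $F$ of $\d$-algebraic type over $F$ with some minimal differential polynomial $G \in F\{Y\}$. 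Lemma~\ref{dalgmaxplimroot}, applied to $F$ and the $\d$-algebraically maximal extension $K^{\mathrm{da}}$, gives $a \in K^{\mathrm{da}} \setminus F$ with $(a_\rho) \pconv a$ and $G(a) = 0$. Corollary~\ref{dhplimroot}, applied to $F$ and the $\d$-henselian extension $\mu(F) \subseteq M$, gives $b \in M$ with $\mu(a_\rho) \pconv b$ and $G^\mu(b) = 0$. Then \cite[Lemma 6.9.3]{adamtt} extends $\mu$ to an isomorphism $F\langle a \rangle \to \mu(F)\langle b \rangle$ over $K$ sending $a$ to $b$, contradicting maximality. Hence $F = K^{\mathrm{da}}$, and $K^{\mathrm{da}}$ is a $\d$-henselization.

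For uniqueness, let $H$ be any $\d$-henselization of $K$. By the defining universal property, $H$ embeds over $K$ into the $\d$-henselization $K^{\mathrm{da}}$; fix such an $\iota\colon H \to K^{\mathrm{da}}$. Then $H$ is $\d$-algebraic over $K$, so $K^{\mathrm{da}}$ is an immediate $\d$-algebraic extension of $\iota(H)$. Now $H$ is $\d$-henselian, has few constants (as an immediate extension of the asymptotic $K$), and satisfies $\Gamma_H = \Gamma$, so Theorem~\ref{dhtodalgmax} (applied with $H$ in the role of $K$) yields that $H$, and hence $\iota(H)$, is $\d$-algebraically maximal. Therefore $\iota(H) = K^{\mathrm{da}}$, so $\iota$ is an isomorphism over $K$.

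The main obstacle I expect is the technical point that few constants passes to the immediate extensions $M$ and $H$ of the asymptotic $K$, on which the applicability of both Corollary~\ref{dhplimroot} (for existence) and Theorem~\ref{dhtodalgmax} (for uniqueness) depends. Granted that, existence is a routine Zorn/maximality argument combining Lemma~\ref{dalgmaxplimroot}, Corollary~\ref{dhplimroot}, and \cite[Lemma 6.9.3]{adamtt}, and uniqueness reduces to the universal property of $K^{\mathrm{da}}$ together with the $\d$-algebraic maximality furnished by Theorem~\ref{dhtodalgmax}.
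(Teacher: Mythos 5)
Your proof is correct and follows essentially the same route as the paper: the paper's candidate, obtained from \cite[Corollary 9.4.11]{adamtt} as an immediate asymptotic $\d$-henselian extension with no proper $\d$-henselian subfields over $K$, coincides with your $K^{\mathrm{da}}$, and the embedding and uniqueness arguments rest on the same ingredients (Lemma~\ref{dalgmaxplimroot}, Corollary~\ref{dhplimroot}, Theorem~\ref{dhtodalgmax}, and \cite[Lemma 6.9.3]{adamtt}). The one point you flag as an obstacle---that few constants passes to the immediate $\d$-henselian extensions $M$ and $H$ of the asymptotic field $K$---is exactly what the paper supplies by citing \cite[Lemmas 9.4.2 and 9.4.5]{adamtt} to conclude that such extensions are asymptotic, hence have few constants.
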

\begin{proof}
By \cite[Corollary~9.4.11]{adamtt} we have an immediate asymptotic $\d$-henselian extension $K^{\dh}$ of $K$ that is $\d$-algebraic over $K$ and has no proper $\d$-henselian subfields containing $K$.
By Theorems \ref{dhtodalgmax} and \ref{maximmed}, there is up to isomorphism over $K$ just one such extension.

Let $L$ be an immediate $\d$-henselian extension of $K$; then $L$ is asymptotic by \cite[Lemmas 9.4.2 and 9.4.5]{adamtt}.
To see that $K^{\dh}$ embeds over $K$ into $L$, use an argument similar to that in the proof of Theorem~\ref{maximmed}, using Corollary~\ref{dhplimroot} in place of Lemma~\ref{dalgmaxplimroot}. Thus $K^{\dh}$ is a $\d$-henselization of
$K$ and any $\d$-henselization of $K$ is isomorphic over $K$ to $K^{\dh}$. 
\end{proof}

\noindent
In fact, the argument shows that $K^{\dh}$ as in the proof of Theorem~\ref{dhensel} embeds over $K$ into any (not necessarily immediate) asymptotic $\d$-henselian extension of $K$.
\begin{cor}
Suppose $K$ is asymptotic, $\bm k$ is linearly surjective, and $\Gamma$ has the dh-configuration property.
Then any immediate $\d$-henselian extension of $K$ that is $\d$-algebraic over $K$ is a $\d$-henselization of $K$.
\end{cor}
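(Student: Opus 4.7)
The plan is to leverage the remark just made after Theorem~\ref{dhensel}, together with Theorem~\ref{dhtodalgmax}, to show that the given extension must coincide with the canonical $\d$-henselization $K^{\dh}$ constructed in the proof of Theorem~\ref{dhensel}. Let $L$ be an immediate $\d$-henselian extension of $K$ that is $\d$-algebraic over $K$.

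First I would verify that $L$ is asymptotic, using Lemmas 9.4.2 and 9.4.5 of \cite{adamtt} (invoked exactly as in the proof of Theorem~\ref{dhensel}, since $K$ is asymptotic and $L$ is an immediate $\d$-henselian extension of $K$). By the remark preceding this corollary, $K^{\dh}$ then embeds over $K$ into $L$ via some valued differential field embedding $\iota \colon K^{\dh} \to L$.

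Next I would argue that $\iota(K^{\dh})$ is a $\d$-algebraically maximal subfield of $L$ containing $K$. Indeed, $\iota(K^{\dh})$ is an immediate extension of $K$ (since $K^{\dh}$ is), so its value group is $\Gamma$, which has the dh-configuration property, and its residue field is $\bm k$, which is linearly surjective. It is asymptotic (being isomorphic to $K^{\dh}$, which is asymptotic by its construction in the proof of Theorem~\ref{dhensel}), hence has few constants, and it is $\d$-henselian via $\iota$. Theorem~\ref{dhtodalgmax} then applies to $\iota(K^{\dh})$ and gives that $\iota(K^{\dh})$ is $\d$-algebraically maximal.

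Finally, I would observe that $L$ is an immediate extension of $\iota(K^{\dh})$ (since both $L$ and $\iota(K^{\dh})$ are immediate over $K$), and $L$ is $\d$-algebraic over $\iota(K^{\dh})$ (since $L$ is $\d$-algebraic over $K$ and $K \subseteq \iota(K^{\dh})$). By $\d$-algebraic maximality of $\iota(K^{\dh})$, it follows that $L = \iota(K^{\dh})$, so $L \cong_K K^{\dh}$ and hence $L$ is a $\d$-henselization of $K$ by Theorem~\ref{dhensel}. There is no real obstacle here beyond the bookkeeping of properties inherited by $\iota(K^{\dh})$; the only point requiring a moment of care is checking that $\iota(K^{\dh})$ still satisfies the hypotheses of Theorem~\ref{dhtodalgmax}, and this is immediate from the fact that $K^{\dh}$ itself does.
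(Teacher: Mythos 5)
Your proposal is correct and follows essentially the same route as the paper: embed $K^{\dh}$ over $K$ into $L$, use that $K^{\dh}$ is asymptotic (hence has few constants) together with Theorem~\ref{dhtodalgmax} to get $\d$-algebraic maximality, and conclude the embedding is surjective since $L$ is an immediate $\d$-algebraic extension of the image. The paper's version is just a terser statement of the same argument.
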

\begin{proof}
Let $K^{\dh}$ be the $\d$-henselization of $K$ from the proof of Theorem~\ref{dhensel}. Then
$K^{\dh}$ is asymptotic, so has few constants, and is thus $\d$-algebraically maximal by Theorem~\ref{dhtodalgmax}.
Hence any $K$-embedding $K^{\dh}\to L$ into an immediate $\d$-algebraic $\d$-henselian extension $L$ of $K$ is surjective.
\end{proof}

\end{document}